\newcommand{\todo}[1]{{\color{red}[[}{\color{blue}\small{#1}}{\color{red}]]}}
\renewcommand{\todo}[1]{\ignorespaces}
\newcommand{\Z}{\mathbb{Z}}
\newcommand{\R}{\mathbb{R}}
\newcommand{\dist}{\operatorname{dist}}
\newcommand{\linspan}{\operatorname{span}}
\newcommand{\norm}[1]{\|{#1}\|}
\newcommand{\normsq}[1]{\|{#1}\|^2}
\newcommand{\lattice}{\mathcal{L}}
\newcommand{\shvec}[1][1]{\operatorname{\lambda}_{#1}}
\newcommand{\cover}{\operatorname{\mu}}
\newcommand{\smooth}[1][\epsilon]{\operatorname{\eta}_{#1}}
\newcommand{\filtr}{\mathcal{F}}
\newcommand{\filtrdetail}[1][m]{{\{\mathbf{0}\} = \lattice_0 \subset \lattice_1 \subset \dots \subset \lattice_{#1} = \lattice}}
\newcommand{\torus}[1][\lattice]{{\R^n / #1}}
\newcommand{\gaussker}[1][s]{\rho_{#1}}
\newcommand{\gaussnormsq}[1][s]{h_{\lattice, #1}}
\newcommand{\gaussembed}[1][s]{H_{\lattice, #1}}
\newcommand{\fullgaussembed}[1][s]{\gaussembed[#1]^{(k)}}
\newcommand{\finalgaussembed}[1][\lattice]{H_{#1}^{(k)}}
\newcommand{\verbosefiltrproj}[1][j]{\pi_{\filtr, #1}}
\newcommand{\verbosefiltrsuffixeq}[1][j]{\pi^\ge_{\filtr, #1}}
\newcommand{\verbosefiltrprefixne}[1][j]{\pi^<_{\filtr, #1}}
\newcommand{\verbosefiltrsuffixne}[1][j]{\pi^>_{\filtr, #1}}
\newcommand{\verbosefiltrprefixeq}[1][j]{\pi^\le_{\filtr, #1}}
\newcommand{\simplefiltrproj}[1][j]{\pi_{#1}}
\newcommand{\simplefiltrsuffixeq}[1][j]{\pi^\ge_{#1}}
\newcommand{\simplefiltrprefixne}[1][j]{\pi^<_{#1}}
\newcommand{\simplefiltrsuffixne}[1][j]{\pi^>_{#1}}
\newcommand{\simplefiltrprefixeq}[1][j]{\pi^\le_{#1}}
\newcommand{\filtrproj}{\verbosefiltrproj}
\newcommand{\filtrsuffixeq}{\verbosefiltrsuffixeq}
\newcommand{\filtrprefixne}{\verbosefiltrprefixne}
\newcommand{\filtrsuffixne}{\verbosefiltrsuffixne}
\newcommand{\filtrprefixeq}{\verbosefiltrprefixeq}
\newcommand{\verbosefiltrembed}[1][j]{E_{\filtr, \alpha, #1}}
\newcommand{\verbosefullfiltrembed}{E_{\filtr, \alpha}}
\newcommand{\simplefiltrembed}[1][j]{E_{\alpha, #1}}
\newcommand{\simplefullfiltrembed}{E_{\alpha}}
\newcommand{\filtrembed}{\verbosefiltrembed}
\newcommand{\fullfiltrembed}{\verbosefullfiltrembed}
\newcommand{\finalfiltrembed}[1][j]{F_{\lattice, #1}}
\newcommand{\finalfullfiltrembed}{F_{\lattice}}
\renewcommand{\epsilon}{\varepsilon}
\newtheorem{theorem}{Theorem}[section]
\newtheorem{lemma}[theorem]{Lemma}
\newtheorem{corollary}[theorem]{Corollary}
\newtheorem{definition}[theorem]{Definition}
\title{Nearly Optimal Embeddings of Flat Tori}
\author{
Ishan Agarwal%
\thanks{Courant Institute of Mathematical Sciences, New York University. Email: \texttt{ia1020@nyu.edu}. Research supported by National Science Foundation (NSF) under Grant No.~CCF-1814524.}
\and
Oded Regev%
\thanks{Courant Institute of Mathematical Sciences, New York University. Research supported by the Simons Collaboration on Algorithms and Geometry, a Simons Investigator Award, and by the National Science Foundation (NSF) under Grant No.~CCF-1814524.}
\and
Yi Tang%
\thanks{Courant Institute of Mathematical Sciences, New York University. Email: \texttt{yt1433@nyu.edu}.}
}
\date{}
\begin{document}

\maketitle

\begin{abstract}
We show that for any $n$-dimensional lattice $\mathcal{L}\subseteq\mathbb{R}^n$, the torus $\mathbb{R}^n/\mathcal{L}$ can be embedded into Hilbert space with $O(\sqrt{n\log n})$ distortion. This improves the previously best known upper bound of $O(n\sqrt{\log n})$ shown by Haviv and Regev (APPROX 2010) and approaches the lower bound of $\Omega(\sqrt{n})$ due to Khot and Naor (FOCS 2005, Math.\ Annal.\ 2006).
\end{abstract}

\section{Introduction}

Low distortion embeddings play an important role in many approximation algorithms, allowing one to map points in a ``difficult'' metric space into another simpler metric space (such as Hilbert space), in a way that approximately preserves distances. See the survey by Indyk~\cite{Indyk} for many examples of algorithmic applications. One interesting family of difficult metric spaces is given by \emph{flat tori}. These are defined as quotients of Euclidean space by a lattice, and play an important role in lattice problems and algorithms.

In more detail, an $n$-dimensional lattice $\lattice\subseteq\R^n$ is defined as the set of all integer linear combinations of some $n$ linearly independent vectors in $\R^n$.
The torus $\torus$ is the quotient space obtained by identifying points in $\R^n$ with each other if their difference is a lattice vector. The torus has a natural metric associated to it; namely, the distance between any two elements of the torus is defined as the minimum distance between any representative of these elements. So for instance, in the one-dimensional case $\R/\Z$, the distance between $0.1$ and $0.9$ is $0.2$.

Khot and Naor~\cite{KhotN06} considered the question of how well one can embed flat tori $\torus$ into Hilbert space. They proved that for any $\lattice$ and any embedding of $\torus$ into Hilbert space, the distortion must be at least $\Omega(\frac{\shvec(\lattice^*)}{\cover(\lattice^*)}\sqrt{n})$. Here, $\lattice^*$ is the dual lattice of $\lattice$ and $\shvec(\lattice)$ and $\cover(\lattice)$ represent the length of the shortest nonzero vector and the covering radius of $\lattice$ respectively. It is known by a result of Conway and Thompson (see \cite[Page 46]{MilnorWillard}) that, for large enough $n$, there exist lattices $\lattice$ where $\shvec(\lattice)=\cover(\lattice)$. Thus the lower bound of Khot and Naor shows that there are $n$-dimensional lattices whose torus requires distortion $\Omega(\sqrt{n})$ in any embedding into Hilbert space. In the same paper, they also present an embedding that achieves a distortion of $O(n^{3n/2})$ for any lattice $\lattice$. While the distortion of their embedding might be better than this upper bound, it is known that for some lattices it is super-polynomial~\cite[Section 7]{HavivR10}.

In~\cite{HavivR10} an $O(n\sqrt{\log n})$ distortion metric embedding is constructed, significantly reducing the gap between the upper and lower bounds. They also provide an alternative upper bound of $O(\sqrt{n\log (\cover(\lattice)/\shvec(\lattice))}$. For lattices with good geometric structure (specifically, where the ratio $\cover(\lattice)/\shvec(\lattice)$ is only polynomial) this gives an $O(\sqrt{n\log n})$ upper bound. However, in general, the ratio $\cover(\lattice)/\shvec(\lattice)$ can be arbitrarily big, in which case this alternative bound is not useful.

Our result is a nearly tight embedding of flat tori, essentially resolving the question of Khot and Naor up to a $\sqrt{\log n}$ factor.

\begin{theorem}\label{thm:main}
For any lattice $\lattice \subseteq \R^n$ there exists a metric embedding of $\torus$ into Hilbert space with distortion $O(\sqrt{n\log n})$.
\end{theorem}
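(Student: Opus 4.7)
The overall strategy is to build the embedding as a weighted $\ell_2$ direct sum of many simpler Gaussian embeddings, indexed by a filtration of sublattices and by a logarithmic range of scales. The point of using a direct sum is Pythagorean: each individual piece only needs to have $O(1)$ distortion when it is ``on scale,'' so that $O(n \log n)$ pieces combine to produce expansion $O(\sqrt{n \log n})$ while still maintaining contraction $\Omega(1)$.

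Step 1 (Filtration). Starting from any lattice $\lattice$, I would first construct a filtration $\filtrdetail$ in which $\lattice_j$ has rank $j$, for instance by iteratively picking a shortest vector in the orthogonal complement of the previous layer (a Korkine--Zolotarev type construction). Writing $b_j^*$ for the length of the $j$-th Gram--Schmidt vector gives a natural family of scale parameters, one per layer.

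Step 2 (Layered Gaussian embeddings). For each level $j$ and each scale $s$ ranging over an $O(\log n)$-long geometric progression bracketing the relevant distances for layer $j$, I would use the projection $\filtrproj$ onto $\R^n/\lattice_j$ followed by the Haviv--Regev Gaussian embedding $\gaussembed$ at scale $s$, restricted (or tensored) so that only the freshly introduced $j$-th Gram--Schmidt coordinate is ``seen.'' The quantitative input is the standard computation $\|\gaussembed(x) - \gaussembed(y)\|^2 \propto \gaussnormsq(0) - \gaussnormsq(x-y)$, which is $\Theta(1)$-biLipschitz for displacements of size $\Theta(s)$ and saturates gracefully outside that regime. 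Assembling these pieces yields the $\verbosefiltrembed$ and $\verbosefullfiltrembed$ objects suggested by the preamble.

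Step 3 (Direct sum and distortion). Define $\finalfullfiltrembed$ as the weighted $\ell_2$ direct sum of all $O(n \log n)$ embeddings from Step 2, with weights chosen so that each piece has expansion $O(1)$. Upper bounding expansion is then a Pythagorean summation giving $O(\sqrt{n \log n})$. For contraction, for any pair $x, y \in \torus$ with torus distance $d$, I would identify a preferred level $j$ at which $x$ and $y$ first become distinguishable modulo $\lattice_{j-1}$, and a preferred on-scale $s$ with $s \approx \max(d, b_j^*)$. I expect to show that the single $(j, s)$ piece alone contributes $\Omega(d^2)$ to $\|\finalfullfiltrembed(x) - \finalfullfiltrembed(y)\|^2$, which by Pythagoras is a lower bound for the whole embedding.

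Main obstacle. The delicate point is the contraction analysis, because the torus distance $\dist(x - y, \lattice)$ is defined by a global minimization over lattice vectors, and the minimizer can ``jump'' between cosets of different sublattices in the filtration. This is essentially what forces Haviv--Regev to pay $O(n\sqrt{\log n})$ or to assume the geometric ratio $\cover(\lattice)/\shvec(\lattice)$ is small. The technical heart of the improvement will therefore be a filtration-aware decomposition lemma showing that the torus distance is, up to constants, controlled by the contribution of a single well-chosen filtration level, uniformly in $\lattice$ and with no dependence on $\cover(\lattice)/\shvec(\lattice)$. I expect this to require combining a careful short-vector witness argument with the smoothing parameter $\smooth$ applied at the appropriate filtration level, so that each layer's intrinsic ``covering radius'' is bounded in terms of $b_j^*$ and never in terms of global quantities.
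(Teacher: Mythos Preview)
Your Step~2 contains a genuine gap that, as stated, would prevent the argument from reaching $O(\sqrt{n\log n})$.  You propose to follow the projection onto $\R^n/\lattice_{j-1}$ by a restriction ``so that only the freshly introduced $j$-th Gram--Schmidt coordinate is seen.''  But projecting $\lattice/\lattice_{j-1}$ onto the one-dimensional span of the $j$-th Gram--Schmidt vector does \emph{not} in general yield a lattice; for a generic $\lattice$ the image is dense, so there is no torus on which to run the Gaussian embedding.  The paper's proof overview flags exactly this obstruction.  If you drop the restriction and keep only the honest projection orthogonal to $\lattice_{j-1}$, you are back to the Haviv--Regev construction: a vector orthogonal to $\lattice_{m-1}$ is seen at full magnitude by all $m$ filtration pieces, so your Step~3 Pythagorean count acquires an extra $\sqrt{m}$ in the expansion and the final distortion is $O(\sqrt{mkn}) = O(n\sqrt{\log n})$, not $O(\sqrt{n\log n})$.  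In other words, the $O(n\log n)$-pieces-each-with-$O(1)$-expansion bookkeeping you describe only works if the pieces are essentially orthogonal, and without the illegal two-sided projection they are not.

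The paper's replacement for this illegal projection is the linear ``compressed projection'' $\filtrembed(\mathbf{x}) = \sum_{i\ge j}\alpha^{i-j}\filtrproj[i](\mathbf{x})$: one still projects orthogonal to $\lattice_{j-1}$, but then geometrically damps the higher coordinates rather than discarding them.  This is an honest lattice-to-lattice map, so the Gaussian embedding applies to $\filtrembed(\lattice)$, and the damping makes the expansion sum $\sum_j\|\filtrembed(\cdot)\|^2$ a geometric series bounded by $1/(1-\alpha^2)$ instead of $m$.  The real work is then showing that the damping does not ruin contraction (Lemma~\ref{lem:filtration-lower}), and this requires not the raw KZ filtration you propose but a coarsened $(q,\gamma)$-filtration in which successive quotient scales grow by a factor $\ge\gamma$.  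Your ``Main obstacle'' paragraph correctly senses that contraction is delicate, but the tools you anticipate (smoothing parameter layer by layer) are not what is used; the argument is purely geometric, tracking which $\filtrembed(\mathbf{v})$ realizes the torus distance in the image.
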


\subsection{Proof overview}

Our starting point is the embedding by Haviv and Regev~\cite{HavivR10}, which is based on Gaussian measures. Their embedding achieves a distortion of $O(\sqrt{ n \log n})$ assuming that the lattice $\lattice$ has $\textrm{poly}(n)$ ``aspect ratio", i.e., the ratio between $\mu(\lattice)$
(the diameter of the torus, or equivalently, the covering
radius of the lattice) and $\lambda_1(\lattice)$ (the length of the shortest nonzero vector in the lattice) is polynomial in the dimension $n$.
Their embedding can also be applied to arbitrary lattices; the only issue is that it ``saturates" at distance $\textrm{poly}(n) \lambda_1(\lattice)$ --- points at greater distance will be contracted by the embedding.
See Section~\ref{sec:gaussian} for the details.

A natural way to address this issue is to partition the lattice into scales, and embed each scale separately. Specifically, one can define a filtration of sublattices
$\filtrdetail$ with each $\lattice_j$ capturing a different scale of the lattice. Then, for each $j=1,\ldots,m$, we project the torus on the space orthogonal to $\lattice_{j-1}$,
and embed each projection in Hilbert space separately. Our embedding is then the direct sum of the $m$ individual embeddings.

This approach does work, and is used as part of the construction in~\cite{HavivR10}. The difficulty is that it introduces an additional $\sqrt{m}$ loss in the distortion, which at worst can be $O(\sqrt{n})$ and is the reason they only achieved an overall distortion of $O(n \sqrt{\log n})$. To see where this loss comes from, consider a short vector inside the span of $\lattice_1$; this vector only contributes to the first embedding (because it becomes zero in the other $m-1$ projections). On the other hand, a short vector orthogonal to $\lattice_{m-1}$ gets accounted for in all $m$ projections, leading to an expansion of $\sqrt{m}$ (the square root due to the $L_2$ norm in the target Hilbert space).

In order to avoid this loss and achieve a $O(\sqrt{n \log n})$ distortion, it is tempting to decompose space into \emph{orthogonal} subspaces (and not nested subspaces as in the above). So instead of projecting on the subspace orthogonal to $\lattice_{j-1}$, we would like to only project on the subspace of $\lattice_{j}$ that is orthogonal to $\lattice_{j-1}$ (i.e., on the span of $\lattice_j / \lattice_{j-1}$). This, however, is impossible; projecting a lattice in such a way in general gives a dense set, and  not a lattice.\footnote{To see why, consider the two-dimensional lattice generated by $(1,0)$ and $(\pi,1)$; its projection on the first coordinate is a dense set.}

Our novel contribution is to replace this ``harsh" two-sided projection (which is in general impossible) by a more gentle ``compressed projection.'' Namely, we first project orthogonally to $\lattice_{j-1}$, and then \emph{scale down} the subspace orthogonal to $\lattice_j$. Returning to the example above, a short vector orthogonal to $\lattice_{m-1}$ is still accounted for in all $m$ ``compressed projections", but the scaling factors are such that its contributions form a geometric series, so the overall expansion is only a constant instead of $\sqrt{m}$. The technical effort is in showing that these compressions do not distort the geometry by too much; see Section~\ref{sec:embeddingtori} for details. We remark that this ``compressed projection" idea might find applications in other cases where decomposing a lattice into scales is desirable.

\section{Preliminaries}\label{prelim}

\subsection{Embeddings and Distortion}

A metric space is a tuple $(\mathcal{M},\dist_{\mathcal{M}})$ where $\mathcal{M}$ is a set and $\dist_{\mathcal{M}}:\mathcal{M}\times\mathcal{M}\to\R$ is a function such that the following hold for all $x,y,z \in \mathcal{M}$:
\begin{itemize}
    \item $\dist_{\mathcal{M}}(x,y)\ge0$, and the equality holds if and only if $x=y$,
    \item $\dist_{\mathcal{M}}(x,y)=\dist_{\mathcal{M}}(y,x)$,
    \item $\dist_{\mathcal{M}}(x,y)+\dist_{\mathcal{M}}(y,z) \ge \dist_{\mathcal{M}}(x,z)$.
\end{itemize}
For simplicity, we often write metric space $\mathcal{M}$ for $(\mathcal{M},\dist_{\mathcal{M}})$.
We also use $\dist$ without the subscript to represent the standard Euclidean metric over $\R^n$ (for some $n$ that is clear from the context).
A (metric) embedding is a mapping from one metric space to another.

\begin{definition}\label{metricdistortion}
Suppose $F: \mathcal{M}_1 \to \mathcal{M}_2$ is an embedding of metric space $\mathcal{M}_1$ into $\mathcal{M}_2$.
The distortion of $F$ is defined by
\begin{equation*}
    \inf\Bigl\{\frac{c_u}{c_l} : \forall x, y \in \mathcal{M}_1, \ c_l \cdot \dist_{\mathcal{M}_1}(x, y) \le \dist_{\mathcal{M}_2}(F(x), F(y)) \le c_u \cdot \dist_{\mathcal{M}_1}(x, y)\Bigr\}
    \;\text.
\end{equation*}
\end{definition}

\subsection{Lattices}

We now recall some standard definitions and notations regarding lattices.
A (full-rank) \emph{lattice} $\lattice \subseteq \R^{n}$ is the set of all integer linear combinations of $n$ linearly independent vectors.
This set of vectors is called a \emph{basis} of the lattice.
Equivalently, a lattice is a discrete subgroup of the additive group $\R^{n}$.
The \emph{dual lattice} $\lattice^*$ of $\lattice$ is defined as the set of all vectors $y \in \linspan(\lattice)$ such that $\langle x,y\rangle$ is an integer for all vectors $x \in \lattice$.
A \emph{sublattice} $\lattice'\subseteq \lattice$ is an additive subgroup of $\lattice$. We say that a sublattice $\lattice'\subseteq \lattice$ is \emph{primitive} if $\lattice'=\lattice\cap\linspan(\lattice')$.
All sublattices in this paper will be primitive.
For a lattice $\lattice$ and a primitive sublattice $\lattice'\subseteq\lattice$, the \emph{quotient lattice} $\lattice/\lattice'$ is defined as the projection of $\lattice$ onto the subspace orthogonal to $\linspan(\lattice')$. Sublattices and quotient lattices can be thought of as full rank while sitting inside some lower-dimensional space.
For lattice $\lattice \subseteq \R^n$, the \emph{torus} $\torus$ is naturally associated with the quotient metric, defined as
\begin{equation*}
    \dist_{\torus}(\mathbf{x}, \mathbf{y}) = \dist(\mathbf{x} - \mathbf{y}, \lattice)
    =\min_{\mathbf{v}\in \lattice} \dist(\mathbf{x} - \mathbf{y}, \mathbf{v})
    \;\text.
\end{equation*}

The \emph{length of the shortest vector} of a lattice $\lattice$, denoted by $\shvec(\lattice)$, is defined as the minimum length of a non-zero vector in $\lattice$. Note that here and elsewhere, length refers to the Euclidean norm.
The \emph{covering radius} of a lattice $\lattice$, denoted by $\cover(\lattice)$, is defined as the maximum (Euclidean) distance from any vector in $\linspan(\lattice)$ to $\lattice$.
Equivalently, as its name suggests, it is the minimum radius such that balls of that radius centered at all lattice points cover the entire $\linspan(\lattice)$.

We end with two simple technical lemmas.

\begin{lemma} \label{lem:Voronoi}
For any $n \ge 1$, lattice $\lattice \subseteq \R^n$, vectors $\mathbf{x} \in \R^n$, $\mathbf{v} \in \lattice$ such that $\norm{\mathbf{x} - \mathbf{v}} = \dist(\mathbf{x}, \lattice)$, and sublattice $\lattice' \subseteq \lattice$,
\begin{equation*}
    \norm{\pi_{\linspan(\lattice')}(\mathbf{x} - \mathbf{v})} \le \cover(\lattice')
    \;\text.
\end{equation*}
\end{lemma}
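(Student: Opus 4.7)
I would prove Lemma~\ref{lem:Voronoi} by contradiction, exploiting the fact that subtracting a vector in $\linspan(\lattice')$ affects only the component of $\mathbf{x}-\mathbf{v}$ inside $\linspan(\lattice')$ and leaves the orthogonal component untouched. Since $\mathbf{v}$ achieves the minimum distance from $\mathbf{x}$ to $\lattice$, any translation of $\mathbf{v}$ by a vector of $\lattice' \subseteq \lattice$ cannot shorten the distance; this will force $\pi_{\linspan(\lattice')}(\mathbf{x}-\mathbf{v})$ to lie within a ball of radius $\cover(\lattice')$ around the origin inside $\linspan(\lattice')$.

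More concretely, write $\pi := \pi_{\linspan(\lattice')}$ and $\pi^\perp := \mathrm{id} - \pi$. Suppose for contradiction that $\norm{\pi(\mathbf{x}-\mathbf{v})} > \cover(\lattice')$. By the definition of the covering radius of $\lattice'$ (viewed as a full-rank lattice in $\linspan(\lattice')$), there exists $\mathbf{w} \in \lattice'$ such that
\begin{equation*}
    \norm{\pi(\mathbf{x}-\mathbf{v}) - \mathbf{w}} \le \cover(\lattice') < \norm{\pi(\mathbf{x}-\mathbf{v})}\;\text.
\end{equation*}
Since $\mathbf{w} \in \linspan(\lattice')$, we have $\pi(\mathbf{w}) = \mathbf{w}$ and $\pi^\perp(\mathbf{w}) = \mathbf{0}$, so
\begin{equation*}
    \normsq{\mathbf{x}-\mathbf{v}-\mathbf{w}} = \normsq{\pi(\mathbf{x}-\mathbf{v}) - \mathbf{w}} + \normsq{\pi^\perp(\mathbf{x}-\mathbf{v})} < \normsq{\pi(\mathbf{x}-\mathbf{v})} + \normsq{\pi^\perp(\mathbf{x}-\mathbf{v})} = \normsq{\mathbf{x}-\mathbf{v}}\;\text.
\end{equation*}
Because $\lattice' \subseteq \lattice$, the vector $\mathbf{v}+\mathbf{w}$ lies in $\lattice$, contradicting the assumption that $\mathbf{v}$ attains $\dist(\mathbf{x},\lattice)$.

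I do not anticipate any real obstacle here: the only subtle point is that for the argument to make sense one must interpret $\cover(\lattice')$ as the covering radius of $\lattice'$ inside its own span (which matches the convention set up in the Preliminaries, since $\linspan(\lattice')$ is where $\lattice'$ is full-rank). The rest is just the Pythagorean identity applied to the decomposition along $\linspan(\lattice')$ and its orthogonal complement.
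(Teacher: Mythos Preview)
Your proof is correct and essentially identical to the paper's own argument: assume the projection exceeds $\cover(\lattice')$, pick a lattice point $\mathbf{w}\in\lattice'$ within covering radius of $\pi_{\linspan(\lattice')}(\mathbf{x}-\mathbf{v})$, and use the Pythagorean decomposition along $\linspan(\lattice')$ and its orthogonal complement to derive a strictly closer lattice point $\mathbf{v}+\mathbf{w}$. Only the notation differs.
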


\begin{proof}
Suppose towards contradiction that $\norm{\pi_{\linspan(\lattice')}(\mathbf{x} - \mathbf{v})} > \cover(\lattice')$.
Then consider the lattice point $\mathbf{u} \in \lattice'$ that is a closest lattice point to $\pi_{\linspan(\lattice')}(\mathbf{x} - \mathbf{v})$ in $\lattice'$. By definition $\norm{\pi_{\linspan(\lattice')}(\mathbf{x} - \mathbf{v}) - \mathbf{u}} \le \cover(\lattice')$.
Observe that
\begin{equation*}
    \begin{aligned}
        \normsq{\mathbf{x} - (\mathbf{v} + \mathbf{u})}
        &= \normsq{\pi_{\linspan(\lattice')}(\mathbf{x} - \mathbf{v} - \mathbf{u})} + \normsq{\pi_{\linspan(\lattice / \lattice')}(\mathbf{x} - \mathbf{v} - \mathbf{u})} \\
        &= \normsq{\pi_{\linspan(\lattice')}(\mathbf{x} - \mathbf{v}) - \mathbf{u}} + \normsq{\pi_{\linspan(\lattice / \lattice')}(\mathbf{x} - \mathbf{v})} \\
        &< \normsq{\pi_{\linspan(\lattice')}(\mathbf{x} - \mathbf{v})} + \normsq{\pi_{\linspan(\lattice / \lattice')}(\mathbf{x} - \mathbf{v})} \\
        &= \normsq{\mathbf{x} - \mathbf{v}}
        \;\text,
    \end{aligned}
\end{equation*}
which contradicts with the fact that $\norm{\mathbf{x} - \mathbf{v}} = \dist(\mathbf{x}, \lattice) = \min_{\mathbf{v}' \in \lattice} \norm{\mathbf{x} - \mathbf{v}'}$.
\end{proof}

\begin{lemma} \label{lem:covering-additive}
For any lattice $\lattice$ and sublattice $\lattice' \subseteq \lattice$,
\begin{equation*}
    \cover(\lattice)^2 \le \cover(\lattice')^2 + \cover(\lattice / \lattice')^2
    \;\text.
\end{equation*}
\end{lemma}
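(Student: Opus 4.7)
My plan is to construct, for an arbitrary point $\mathbf{x} \in \linspan(\lattice)$, a lattice vector $\mathbf{w} + \mathbf{u} \in \lattice$ whose squared distance to $\mathbf{x}$ is at most $\cover(\lattice')^2 + \cover(\lattice / \lattice')^2$; taking the supremum over $\mathbf{x}$ then yields the lemma. The construction proceeds in two orthogonal steps, paralleling the orthogonal decomposition $\linspan(\lattice) = \linspan(\lattice') \oplus \linspan(\lattice / \lattice')$, which holds because (by the preliminaries) $\lattice / \lattice'$ is defined as the projection of $\lattice$ onto $\linspan(\lattice')^\perp$ within $\linspan(\lattice)$.

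First I would write $\mathbf{x} = \mathbf{x}_\parallel + \mathbf{x}_\perp$ with $\mathbf{x}_\parallel \in \linspan(\lattice')$ and $\mathbf{x}_\perp \in \linspan(\lattice / \lattice')$. Since $\lattice / \lattice'$ has covering radius $\cover(\lattice / \lattice')$ inside $\linspan(\lattice / \lattice')$, and every element of $\lattice / \lattice'$ arises as $\pi_{\linspan(\lattice / \lattice')}(\mathbf{w})$ for some $\mathbf{w} \in \lattice$, I can pick $\mathbf{w} \in \lattice$ with $\norm{\mathbf{x}_\perp - \pi_{\linspan(\lattice / \lattice')}(\mathbf{w})} \le \cover(\lattice / \lattice')$.

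Next, $\mathbf{x}_\parallel - \pi_{\linspan(\lattice')}(\mathbf{w})$ lies in $\linspan(\lattice')$, so by definition of $\cover(\lattice')$ there exists $\mathbf{u} \in \lattice'$ with $\norm{(\mathbf{x}_\parallel - \pi_{\linspan(\lattice')}(\mathbf{w})) - \mathbf{u}} \le \cover(\lattice')$. Because $\mathbf{w} \in \linspan(\lattice)$ splits as $\pi_{\linspan(\lattice')}(\mathbf{w}) + \pi_{\linspan(\lattice / \lattice')}(\mathbf{w})$, the error $\mathbf{x} - (\mathbf{w} + \mathbf{u})$ decomposes into the two pieces just bounded, which sit in orthogonal subspaces. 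Pythagoras then gives $\norm{\mathbf{x} - (\mathbf{w} + \mathbf{u})}^2 \le \cover(\lattice')^2 + \cover(\lattice / \lattice')^2$, and $\mathbf{w} + \mathbf{u} \in \lattice$, as desired.

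I do not anticipate any real obstacle. The only points to handle carefully are the use of primitivity of $\lattice'$ (so that $\lattice / \lattice'$ is genuinely a lattice with the advertised covering radius in $\linspan(\lattice')^\perp \cap \linspan(\lattice)$) and the orthogonal splitting of $\mathbf{w}$, both of which are immediate from the setup in the preliminaries. The entire argument is essentially a ``lift-and-correct'' two-step rounding combined with the Pythagorean identity.
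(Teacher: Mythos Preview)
Your proposal is correct and is essentially the same argument as the paper's: pick a lattice vector whose projection onto $\linspan(\lattice/\lattice')$ is within $\cover(\lattice/\lattice')$ of $\pi_{\linspan(\lattice/\lattice')}(\mathbf{x})$, then correct the $\linspan(\lattice')$-component by adding a suitable $\mathbf{u}\in\lattice'$, and conclude by Pythagoras. The paper phrases the first step as choosing the \emph{closest} point in $\lattice/\lattice'$ (so the orthogonal error equals $\dist(\pi_{\linspan(\lattice/\lattice')}(\mathbf{x}),\lattice/\lattice')$), but this is an inessential difference.
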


\begin{proof}
For any $\mathbf{x} \in \linspan(\lattice)$, let $\mathbf{v} \in \lattice$ be a lattice point such that
\begin{equation*}
    \norm{\pi_{\linspan(\lattice / \lattice')}(\mathbf{x} - \mathbf{v})} = \dist(\pi_{\linspan(\lattice / \lattice')}(\mathbf{x}), \lattice / \lattice')
    \;\text.
\end{equation*}
Without loss of generality it can be assumed that $\norm{\pi_{\linspan(\lattice')}(\mathbf{x} - \mathbf{v})} \le \cover(\lattice')$
(since otherwise, we can use $\mathbf{v} + \mathbf{u}$ instead of $\mathbf{v}$, where $\mathbf{u}$ is a closest lattice point to $\pi_{\linspan(\lattice')}(\mathbf{x} - \mathbf{v})$ in $\lattice'$).
Then
\begin{equation*}
    \begin{aligned}
        \dist(\mathbf{x}, \lattice)^2
        &\le \normsq{\mathbf{x} - \mathbf{v}} \\
        &= \normsq{\pi_{\linspan(\lattice')}(\mathbf{x} - \mathbf{v})} + \normsq{\pi_{\linspan(\lattice / \lattice')}(\mathbf{x} - \mathbf{v})} \\
        &\le \cover(\lattice')^2 + \cover(\lattice / \lattice')^2
        \;\text.
    \end{aligned}
\end{equation*}
The bound holds for any vector $\mathbf{x}$.
Hence $\cover(\lattice)^2 \le \cover(\lattice')^2 + \cover(\lattice / \lattice')^2$, as desired.
\end{proof}

\section{Embedding Tori into Hilbert Space}\label{sec:gaussian}

The goal of this section is to prove Lemma~\ref{lem:gaussian-distortion}, which summarizes the properties of the Gaussian embedding
from~\cite{HavivR10}, including a modified contraction property which we make explicit (see left-hand side of~\eqref{eq:distortionbasicembedding}). The proof closely follows that  of~\cite[Theorem 1.4]{HavivR10}.
We start with some preliminary definitions and results from~\cite{HavivR10}.

For $s>0$ and $\mathbf{x} \in \R^n$ we define  $\gaussker(\mathbf{x}) = \exp(-\pi \normsq{\mathbf{x} / s})$. For any discrete set $A$, its Gaussian mass $\rho_s(A)$ is defined as $\sum_{\mathbf{x}\in A}\gaussker(\mathbf{x})$.
The \emph{smoothing parameter} of a lattice $\lattice$ is defined with respect to an $\epsilon>0$ and is given by
\begin{equation*}
    \smooth(\lattice)=\min\{s:\rho_{1/s}(\lattice^*)\leq1+\epsilon\}
    \;\text.
\end{equation*}

\begin{lemma}[{\cite[Lemma 2.5]{HavivR10}}] \label{lem:smoothing}
For any $n \ge 1$ and lattice $\lattice \subseteq \R^n$,
$\smooth(\lattice^*) \le \frac{2 \sqrt{n}}{\shvec(\lattice)}$ where $\epsilon = 2^{-10 n}$.
\end{lemma}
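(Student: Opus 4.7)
The plan is to dualize the smoothing parameter definition and then bound a Gaussian sum on $\lattice$ directly via a standard shell-counting packing argument. Applying the definition of $\smooth[\epsilon]$ to $\lattice^*$ and using $(\lattice^*)^* = \lattice$, we have $\smooth[\epsilon](\lattice^*) = \min\{s : \rho_{1/s}(\lattice) \le 1 + \epsilon\}$. So for $s := 2\sqrt{n}/\shvec(\lattice)$ and $\epsilon := 2^{-10n}$, it suffices to show that $\sum_{\mathbf{v} \in \lattice \setminus \{\mathbf{0}\}} \exp(-\pi s^2 \normsq{\mathbf{v}}) \le \epsilon$. Substituting the value of $s$, each nonzero lattice vector contributes $\exp(-4\pi n \normsq{\mathbf{v}}/\shvec(\lattice)^2)$, which by $\norm{\mathbf{v}} \ge \shvec(\lattice)$ is individually at most $e^{-4\pi n}$.

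To control the total I would use the standard packing estimate: open Euclidean balls of radius $\shvec(\lattice)/2$ around distinct lattice points are pairwise disjoint, so by volume comparison the number of lattice points of norm at most $r$ is bounded by $(1 + 2r/\shvec(\lattice))^n$. Applied at $r = (k+1)\shvec(\lattice)$, this bounds the size of the $k$-th shell $S_k := \{\mathbf{v} \in \lattice : k\shvec(\lattice) \le \norm{\mathbf{v}} < (k+1)\shvec(\lattice)\}$ by $(2k+3)^n$. Each $\mathbf{v} \in S_k$ contributes at most $\exp(-4\pi n k^2)$, so grouping by shells for $k \ge 1$ gives
\begin{equation*}
    \sum_{\mathbf{v} \in \lattice \setminus \{\mathbf{0}\}} \exp(-\pi s^2 \normsq{\mathbf{v}})
    \le \sum_{k=1}^\infty (2k+3)^n \exp(-4\pi n k^2)
    \;\text.
\end{equation*}

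The dominant term $k = 1$ equals $5^n e^{-4\pi n}$; since $\ln 5 - 4\pi \approx -10.96 < -10 \ln 2 \approx -6.93$, this term alone comfortably lies below $2^{-10n}$. Consecutive terms shrink by a factor $((2k+5)/(2k+3))^n e^{-4\pi n(2k+1)}$, which is super-exponentially small in $k$, so the full tail is absorbed by a constant factor without spoiling the bound. This gives $\rho_{1/s}(\lattice \setminus \{\mathbf{0}\}) \le 2^{-10n}$, and hence $\smooth[\epsilon](\lattice^*) \le s = 2\sqrt{n}/\shvec(\lattice)$, as claimed.

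There is no real conceptual obstacle: the argument is the textbook fact that well past the smoothing parameter the Gaussian mass on a lattice concentrates at the origin. The only care required is the numerical verification that the $4\pi$ in the exponent (which comes directly from the choice $s^2 = 4n/\shvec(\lattice)^2$) is large enough to defeat the $(2k+3)^n$ shell count and still leave the $2^{-10n}$ budget comfortable. The factor $2$ inside $s$ is tuned precisely to make this go through.
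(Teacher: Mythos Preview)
The paper does not actually prove this lemma; it simply cites it from \cite[Lemma 2.5]{HavivR10}. So there is no in-paper argument to compare against. Your proposal is the standard proof of this fact (essentially the Banaszczyk/Micciancio--Regev tail bound via sphere packing and shell summation), and the numerical checks you outline are correct: the dominant shell contributes $5^n e^{-4\pi n}=e^{(\ln 5-4\pi)n}$ with $\ln 5-4\pi\approx -10.96$, comfortably below $-10\ln 2\approx -6.93$, and the ratio of consecutive shell contributions is at most $(7/5)^n e^{-12\pi n}\ll 1/2$, so summing the geometric tail costs at most a factor of $2$, which the $\approx 4n$ slack in the exponent absorbs for all $n\ge 1$. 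One small stylistic point: your final paragraph (``the full tail is absorbed by a constant factor'') would benefit from being made explicit as above, since the statement is claimed for all $n\ge 1$ and not just asymptotically; but the arithmetic does go through.
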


Consider the function
\begin{equation*}
    \gaussnormsq(\mathbf{x}) = 1 - \frac{\gaussker(\lattice - \mathbf{x})}{\gaussker(\lattice)}
    \;\text.
\end{equation*}
Below we list some basic properties of this function,
which ideally we would like to be proportional to the squared distance from the lattice.
This is indeed the case, assuming the distance is not too large compared to $s$, and that $s$ itself is small compared to the geometry of the lattice. The upper bound is shown in Item 1, and the lower bound is established
in Items 2 and 3 (which give very similar bounds).
When the distance is sufficiently larger than $s$, the function reaches saturation, as shown in Item 4.

\todo{OR: we should think if we can merge Items 2 and 3 into one item. Maybe go back to HR and try to see if we prove one statement? It's ugly to have two, and they are extremely similar}

\begin{lemma}[{\cite[Lemmas 3.1 and 3.2]{HavivR10}}] \label{lem:gaussian-bound}
For any $n \ge 1$, lattice $\lattice \subseteq \R^n$, $s > 0$, and vector $\mathbf{x} \in \R^n$,
\begin{enumerate}
    \item $s^2 \cdot \gaussnormsq(\mathbf{x}) \le \pi \cdot \dist(\mathbf{x}, \lattice)^2$,
    \item $s^2 \cdot \gaussnormsq(\mathbf{x}) \ge c \cdot \dist(\mathbf{x}, \lattice)^2$ if $s \le \frac{1}{2 \smooth(\lattice^*)}$ for some $0 < \epsilon \le \frac{1}{1000}$ and $\dist(\mathbf{x}, \lattice) \le \frac{s}{\sqrt{2}}$, where $c$ is an absolute constant,
    \item $\gaussnormsq(\mathbf{x}) \ge 1 - e^{-\pi \dist(\mathbf{x}, \lattice)^2 / s^2} - 2^{-11 n}$ if $\shvec(\lattice) \ge 4 \sqrt{n} \cdot s$,
    \item $\gaussnormsq(\mathbf{x}) \ge 1 - 2^{-11 n}$ if $\dist(\mathbf{x}, \lattice) > 2 \sqrt{n} \cdot s$.
\end{enumerate}
\end{lemma}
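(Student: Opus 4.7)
Throughout, let $\mathbf{v}_0 \in \lattice$ be a closest lattice point to $\mathbf{x}$ and set $\mathbf{d} = \mathbf{x} - \mathbf{v}_0$, so that $\norm{\mathbf{d}} = \dist(\mathbf{x}, \lattice)$, $\mathbf{d}$ lies in the Voronoi cell of the origin, and $\gaussker(\lattice - \mathbf{x}) = \gaussker(\lattice - \mathbf{d})$ by $\lattice$-periodicity. For Item~1 I would expand $\normsq{\mathbf{v} - \mathbf{d}} = \normsq{\mathbf{v}} - 2\langle \mathbf{v}, \mathbf{d}\rangle + \normsq{\mathbf{d}}$ inside the sum defining $\gaussker(\lattice - \mathbf{d})$ and pair $\mathbf{v}$ with $-\mathbf{v}$, which turns the cross term into $\cosh(2\pi \langle \mathbf{v}, \mathbf{d}\rangle / s^2) \ge 1$. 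This yields the Jensen-type lower bound
\begin{equation*}
    \gaussker(\lattice - \mathbf{d}) = e^{-\pi \normsq{\mathbf{d}}/s^2} \sum_{\mathbf{v} \in \lattice} e^{-\pi \normsq{\mathbf{v}}/s^2} \cosh\bigl(2\pi \langle \mathbf{v}, \mathbf{d}\rangle/s^2\bigr) \ge e^{-\pi \normsq{\mathbf{d}}/s^2} \gaussker(\lattice),
\end{equation*}
and combining with $1 - e^{-t} \le t$ immediately gives Item~1.

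For Items~2 and~3 I would isolate the $\mathbf{v} = \mathbf{0}$ contribution and bound the tail $\sum_{\mathbf{v} \in \lattice \setminus \{\mathbf{0}\}} e^{-\pi \normsq{\mathbf{v}-\mathbf{d}}/s^2}$ uniformly. The Voronoi property $\norm{\mathbf{v}-\mathbf{d}} \ge \norm{\mathbf{d}}$ together with the triangle inequality $\norm{\mathbf{v}-\mathbf{d}} \ge \norm{\mathbf{v}} - \norm{\mathbf{d}}$ gives $\norm{\mathbf{v}-\mathbf{d}} \ge \norm{\mathbf{v}}/2$ for every nonzero $\mathbf{v} \in \lattice$, so this tail is at most $\gaussker[2s](\lattice \setminus \{\mathbf{0}\})$. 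Under the hypothesis of Item~3, a standard packing estimate for $\shvec(\lattice) \ge 4\sqrt{n}\,s$ bounds this tail by $2^{-11n}$, and Item~3 follows using $\gaussker(\lattice) \ge 1$. For Item~2 the condition $s \le 1/(2\smooth(\lattice^*))$ unpacks, via the definition of $\smooth$ and $\lattice^{**} = \lattice$, to $\gaussker[2s](\lattice \setminus \{\mathbf{0}\}) \le \epsilon$, yielding $\gaussnormsq(\mathbf{x}) \ge 1 - e^{-\pi \normsq{\mathbf{d}}/s^2} - \epsilon$. The main obstacle is converting this into the claimed quadratic lower bound $\Omega(\normsq{\mathbf{d}}/s^2)$ uniformly for $\norm{\mathbf{d}} \le s/\sqrt{2}$: a Taylor expansion of $1-e^{-t}$ on $[0,\pi/2]$ works when $\normsq{\mathbf{d}}/s^2$ dominates $\epsilon$, but the small-distance regime requires switching to the Poisson-summation identity
\begin{equation*}
    \gaussnormsq(\mathbf{x}) = \frac{1}{\gaussker[1/s](\lattice^*)} \sum_{\mathbf{w} \in \lattice^* \setminus \{\mathbf{0}\}} e^{-\pi s^2 \normsq{\mathbf{w}}} \bigl(1 - \cos(2\pi \langle \mathbf{d}, \mathbf{w}\rangle)\bigr),
\end{equation*}
applying the inequality $1 - \cos(2\pi\theta) \ge 8\theta^2$ valid on $|\theta| \le 1/2$, and using the smoothing hypothesis to show that the resulting dual second-moment matrix $\sum_{\mathbf{w} \ne \mathbf{0}} e^{-\pi s^2 \normsq{\mathbf{w}}} \mathbf{w}\mathbf{w}^\top$ is close to its Gaussian-integral counterpart $\frac{\det(\lattice)}{2\pi s^{n+2}} I$, which delivers a lower bound with an absolute constant.

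For Item~4 I would invoke Banaszczyk's concentration lemma for cosets: for any $\mathbf{c} \in \R^n$ and $u \ge 1$,
\begin{equation*}
    \gaussker\bigl((\lattice + \mathbf{c}) \setminus B(\mathbf{0}, u\sqrt{n}\,s)\bigr) \le \bigl(u\sqrt{2\pi e}\,e^{-\pi u^2}\bigr)^n \gaussker(\lattice).
\end{equation*}
At $u = 2$ the factor on the right-hand side is comfortably below $2^{-11n}$, and since the hypothesis $\dist(\mathbf{x}, \lattice) > 2\sqrt{n}\,s$ places every element of the coset $\lattice - \mathbf{x}$ outside $B(\mathbf{0}, 2\sqrt{n}\,s)$, the entire sum $\gaussker(\lattice - \mathbf{x})$ is bounded by $2^{-11n}\gaussker(\lattice)$, establishing Item~4.
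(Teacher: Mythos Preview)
The paper does not prove this lemma at all; it is stated with citation to~\cite{HavivR10} and used as a black box, so there is no ``paper's own proof'' to compare against. Your outline is therefore being judged on its own merits.

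Items~1, 3, and~4 are handled correctly and by the standard route: the $\cosh$-pairing for Item~1, the Voronoi inequality $\norm{\mathbf{v}-\mathbf{d}}\ge\norm{\mathbf{v}}/2$ to reduce the tail to $\gaussker[2s](\lattice\setminus\{\mathbf{0}\})$ for Item~3, and Banaszczyk's coset concentration for Item~4. One small quibble in Item~3: Banaszczyk's bound is stated relative to $\gaussker[2s](\lattice)$, so to extract an absolute bound on $\gaussker[2s](\lattice\setminus\{\mathbf{0}\})$ you need the extra step $\gaussker[2s](\lattice\setminus\{\mathbf{0}\})\le C^n(1+\gaussker[2s](\lattice\setminus\{\mathbf{0}\}))$ and then solve; this is routine but should be mentioned.

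Item~2 is where your sketch is thinnest. The inequality $1-\cos(2\pi\theta)\ge 8\theta^2$ is only valid for $|\theta|\le 1/2$, but $\theta=\langle\mathbf{d},\mathbf{w}\rangle$ ranges over all of $\lattice^*$ and is unbounded, so you cannot apply it termwise. You gesture at this by saying the second-moment matrix is ``close to its Gaussian-integral counterpart,'' but that is precisely the nontrivial step: one must either (a) split the dual sum into small-$\mathbf{w}$ terms where the cosine inequality applies and large-$\mathbf{w}$ terms controlled by the smoothing hypothesis, or (b) differentiate the Poisson identity twice at $\mathbf{d}=\mathbf{0}$ and use the smoothing condition to control the Hessian directly. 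Either works and yields an absolute constant, but as written your argument has a gap between ``apply $1-\cos\ge 8\theta^2$'' and ``the second moment is close to $\frac{\det(\lattice)}{2\pi s^{n+2}}I$,'' and the truncation or differentiation step that bridges them is the actual content of the lemma in~\cite{HavivR10}.
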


\begin{definition}[{\cite[Section 5]{HavivR10}}] \label{def-embedding-gaussian-component}
For lattice $\lattice \subseteq \R^n$ and $s > 0$, the embedding $\gaussembed: \torus \to L_2(\torus)$ maps vector $\mathbf{x} \in \R^n$ to the function $\gaussembed(\mathbf{x}) \in L_2(\torus)$ given by
\begin{equation*}
    \gaussembed(\mathbf{x})(\mathbf{y}) = \frac{s}{\sqrt{2 \rho_s(\lattice)}} \left( \frac{2}{s} \right)^{n / 2} \rho_{\frac{s}{\sqrt{2}}}(\lattice + \mathbf{y} - \mathbf{x})
    \;\text.
\end{equation*}
\end{definition}

\begin{lemma}[{\cite[Proposition 5.1]{HavivR10}}] \label{lem:gaussian-distance}
For any $n \ge 1$, lattice $\lattice \subseteq \R^n$, $s > 0$, and vectors $\mathbf{x}, \mathbf{y} \in \R^n$,
$\dist_{L_2(\torus)}(\gaussembed(\mathbf{x}), \gaussembed(\mathbf{y}))^2 = s^2 \cdot \gaussnormsq(\mathbf{x} - \mathbf{y})$.
\end{lemma}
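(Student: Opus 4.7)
The plan is to unfold the integral defining $\dist_{L_2(\torus)}(\gaussembed(\mathbf{x}), \gaussembed(\mathbf{y}))^2$ into an integral over all of $\R^n$, so that the Gaussian convolution identity reduces everything to a ratio of theta series. Write $C = \tfrac{s}{\sqrt{2\rho_s(\lattice)}}(2/s)^{n/2}$ and $f_{\mathbf{x}}(\mathbf{z}) = \rho_{s/\sqrt 2}(\lattice + \mathbf{z} - \mathbf{x})$, so $\gaussembed(\mathbf{x})(\mathbf{z}) = C f_{\mathbf{x}}(\mathbf{z})$. Then
\begin{equation*}
\dist_{L_2(\torus)}(\gaussembed(\mathbf{x}), \gaussembed(\mathbf{y}))^2 = C^2\int_{\torus} f_{\mathbf{x}}(\mathbf{z})^2\,d\mathbf{z} + C^2\int_{\torus} f_{\mathbf{y}}(\mathbf{z})^2\,d\mathbf{z} - 2 C^2 \int_{\torus} f_{\mathbf{x}}(\mathbf{z})\,f_{\mathbf{y}}(\mathbf{z})\,d\mathbf{z}\;\text.
\end{equation*}
The two squared terms are equal by translation invariance of the Haar measure on $\torus$, so only two distinct integrals need to be evaluated.

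The crucial step is a standard unfolding: since $f_{\mathbf{y}}$ is $\lattice$-periodic, expanding the sum defining $f_{\mathbf{x}}$ and substituting $\mathbf{z}' = \mathbf{z} + \mathbf{u}$ term-by-term turns $\int_{\torus}f_{\mathbf{x}}f_{\mathbf{y}}$ into the sum over $\mathbf{u}\in\lattice$ of integrals over translates of a fundamental domain, which tile $\R^n$. This yields
\begin{equation*}
\int_{\torus} f_{\mathbf{x}}(\mathbf{z})\,f_{\mathbf{y}}(\mathbf{z})\,d\mathbf{z} = \int_{\R^n}\rho_{s/\sqrt 2}(\mathbf{w})\,f_{\mathbf{y}}(\mathbf{w}+\mathbf{x})\,d\mathbf{w} = \sum_{\mathbf{v}\in\lattice}\int_{\R^n}\rho_{s/\sqrt 2}(\mathbf{w})\,\rho_{s/\sqrt 2}\bigl(\mathbf{w}-(\mathbf{y}-\mathbf{x})+\mathbf{v}\bigr)\,d\mathbf{w}\;\text.
\end{equation*}
Each inner integral is the convolution $\rho_{s/\sqrt 2}*\rho_{s/\sqrt 2}$ evaluated at $\mathbf{y}-\mathbf{x}-\mathbf{v}$; completing the square in the exponent shows $\rho_{s/\sqrt 2}*\rho_{s/\sqrt 2} = (s/2)^n\,\rho_s$, so the sum collapses to $(s/2)^n\rho_s\bigl(\lattice + (\mathbf{x}-\mathbf{y})\bigr)$. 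The same calculation with $\mathbf{y} = \mathbf{x}$ gives $\int_{\torus}f_{\mathbf{x}}^2 = (s/2)^n\rho_s(\lattice)$.

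Substituting these into the expansion and using $2C^2(s/2)^n = s^2/\rho_s(\lattice)$ yields
\begin{equation*}
\dist_{L_2(\torus)}(\gaussembed(\mathbf{x}), \gaussembed(\mathbf{y}))^2 = \frac{s^2}{\rho_s(\lattice)}\Bigl(\rho_s(\lattice) - \rho_s\bigl(\lattice - (\mathbf{x}-\mathbf{y})\bigr)\Bigr) = s^2\cdot\gaussnormsq(\mathbf{x}-\mathbf{y})\;\text,
\end{equation*}
which is the claim. The only nontrivial step is justifying the unfolding (trivially valid since all summands are nonnegative, so Tonelli applies) and keeping track of the normalization constants through the convolution identity; both are routine.
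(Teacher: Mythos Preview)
Your proof is correct: the unfolding of the $L_2(\torus)$ inner product to an integral over $\R^n$, followed by the Gaussian convolution identity $\rho_{s/\sqrt 2}*\rho_{s/\sqrt 2}=(s/2)^n\rho_s$, yields exactly the claimed formula, and the bookkeeping with the normalization constant $C$ checks out (note also that $\rho_s(\lattice+(\mathbf{x}-\mathbf{y}))=\rho_s(\lattice-(\mathbf{x}-\mathbf{y}))$ since $\lattice=-\lattice$ and $\rho_s$ is even, which you use implicitly in the final line). The paper itself does not give a proof of this lemma---it simply cites \cite[Proposition 5.1]{HavivR10}---so there is nothing to compare against here; your argument is the standard direct computation and matches what one finds in that reference.
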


\begin{definition}[{\cite[Section 5.1]{HavivR10}}] \label{def-embedding-gaussian}
For lattice $\lattice \subseteq \R^n$, $s > 0$, and $k \ge 1$, the embedding $\fullgaussembed$ is defined by $\fullgaussembed = (\gaussembed[s_1], \dots, \gaussembed[s_k])$ where $s_i = 2^{i-1} s$.
We often take $s = \shvec(\lattice)/(4 \sqrt{n})$, in which case we omit the subscript $s$ and simply write
$\finalgaussembed$.
\end{definition}

\begin{lemma} \label{lem:gaussian-distortion}
For any $n \ge 1$, lattice $\lattice \subseteq \R^n$, $k \ge 1$, and vectors $\mathbf{x}, \mathbf{y} \in \R^n$,
\begin{align}
    \frac{c_H}{n} \cdot \min(\dist_{\torus}(\mathbf{x}, \mathbf{y}), 2^{k-1} \shvec(\lattice))^2 \le \dist_{L_2(\torus)^k}(\finalgaussembed(\mathbf{x}), \finalgaussembed(\mathbf{y}))^2 \le \pi k \cdot \dist_{\torus}(\mathbf{x}, \mathbf{y})^2
    \;\text,
    \label{eq:distortionbasicembedding}
\end{align}
where $c_H>0$ is an absolute constant.
\todo{YT: note for posterity: if not using Lemma~\ref{lem:gaussian-bound} Item~3, there would be a lower bound on $k$ (roughly $k \ge \log \sqrt{n}$).}
\end{lemma}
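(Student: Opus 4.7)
The plan is to expand the squared product distance componentwise via Lemma~\ref{lem:gaussian-distance}, which gives
\[
    \dist_{L_2(\torus)^k}(\finalgaussembed(\mathbf{x}), \finalgaussembed(\mathbf{y}))^2 = \sum_{i=1}^k s_i^2 \cdot \gaussnormsq[s_i](\mathbf{x} - \mathbf{y}),
\]
where $s_i = 2^{i-1}\shvec(\lattice)/(4\sqrt{n})$. Both inequalities in \eqref{eq:distortionbasicembedding} then reduce to term-by-term estimates, selecting the appropriate item of Lemma~\ref{lem:gaussian-bound} at each scale.

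The upper bound is immediate: Item~1 of Lemma~\ref{lem:gaussian-bound} gives $s_i^2 \gaussnormsq[s_i](\mathbf{x}-\mathbf{y}) \le \pi \dist_\torus(\mathbf{x},\mathbf{y})^2$ for every $i$, so summing over $i=1,\ldots,k$ yields $\pi k \dist_\torus(\mathbf{x},\mathbf{y})^2$.

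For the lower bound, let $d = \dist_\torus(\mathbf{x},\mathbf{y})$ and $D = \min(d, 2^{k-1}\shvec(\lattice))$. The strategy is to isolate a single term of the sum whose value is already $\Omega(D^2/n)$, choosing the index according to the regime of $d$. In the \emph{saturated regime} $d \ge \shvec(\lattice)/2$, I would take $i^*$ to be the largest index in $\{1,\ldots,k\}$ with $d \ge 2\sqrt{n}\,s_{i^*}$; Item~4 gives $\gaussnormsq[s_{i^*}] \ge 1 - 2^{-11n} \ge 1/2$, and a short check (using the maximality $s_{i^*+1} > d/(2\sqrt{n})$ when $i^* < k$, and $s_k \ge D/(4\sqrt{n})$ when $i^* = k$) shows $s_{i^*}^2 \ge D^2/(16n)$. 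In the \emph{linear regime} $d \le s_1/\sqrt{2}$, I would apply Item~2 at scale $s_1$; its hypothesis $s_1 \le 1/(2\smooth[\epsilon](\lattice^*))$ follows from Lemma~\ref{lem:smoothing} with $\epsilon = 2^{-10n} < 1/1000$, giving $s_1^2 \gaussnormsq[s_1] \ge c\,d^2$ for an absolute constant $c$. In the remaining \emph{middle regime} $s_1/\sqrt{2} < d < \shvec(\lattice)/2$, I would apply Item~3 at scale $s_1$: the condition $\shvec(\lattice) = 4\sqrt{n}\,s_1$ holds with equality, and $d > s_1/\sqrt{2}$ forces $e^{-\pi d^2/s_1^2} \le e^{-\pi/2}$, so $\gaussnormsq[s_1]$ is bounded below by an absolute constant; combined with $s_1 > d/(2\sqrt{n})$ (from $d < \shvec(\lattice)/2$) this yields $s_1^2 \gaussnormsq[s_1] = \Omega(d^2/n)$.

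The hard part is the middle regime: there, at every admissible scale $d$ is simultaneously too large for Item~2 (which requires $d \le s/\sqrt{2}$) and too small for Item~4 (which requires $d \gtrsim \sqrt{n}\,s$). Item~3 was designed precisely for this window, and using it bypasses the alternative of having to assume $k \gtrsim \log\sqrt{n}$ so that doubling reaches the saturation regime. Beyond this the only remaining work is routine bookkeeping of absolute constants and verifying that the $2^{-11n}$ additive errors in Items~3 and~4 are dominated by the main positive terms for all $n \ge 1$.
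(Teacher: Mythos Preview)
Your proposal is correct and follows essentially the same approach as the paper: expand via Lemma~\ref{lem:gaussian-distance}, use Item~1 termwise for the upper bound, and for the lower bound split into three regimes handled respectively by Items~2, 3, and~4 at a single well-chosen scale. The only cosmetic difference is that you place the middle/saturated boundary at $\shvec(\lattice)/2$ whereas the paper places it at $\shvec(\lattice)=4\sqrt{n}\,s_1$; both choices work (just be careful that Item~4 requires a strict inequality $d>2\sqrt{n}\,s$, so the boundary point $d=\shvec(\lattice)/2$ should be absorbed into the middle regime).
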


\begin{proof}
By Lemma~\ref{lem:gaussian-distance}, and recalling the notation $s_i = 2^{i-1} s$ where $s = \frac{\shvec(\lattice)}{4 \sqrt{n}}$,
\begin{equation*}
    \begin{aligned}
        \dist_{L_2(\torus)^k}(\finalgaussembed(\mathbf{x}), \finalgaussembed(\mathbf{y}))^2
        &= \sum_{i = 1}^k \dist_{L_2(\torus)}(\gaussembed[s_i](\mathbf{x}), \gaussembed[s_i](\mathbf{y}))^2 \\
        &= \sum_{i = 1}^k s_i^2 \cdot \gaussnormsq[s_i](\mathbf{x} - \mathbf{y})
        \;\text.
    \end{aligned}
\end{equation*}
Noting that $\dist_{\torus}(\mathbf{x}, \mathbf{y}) = \dist(\mathbf{x} - \mathbf{y}, \lattice)$,
the upper bound in~\eqref{eq:distortionbasicembedding} follows from Item~1 in Lemma~\ref{lem:gaussian-bound}:
\begin{align*}
    \sum_{i = 1}^k s_i^2 \cdot \gaussnormsq[s_i](\mathbf{x} - \mathbf{y})
    \le \sum_{i = 1}^k \pi \cdot \dist(\mathbf{x} - \mathbf{y}, \lattice)^2
    = \pi k \cdot \dist(\mathbf{x} - \mathbf{y}, \lattice)^2
    \;\text.
\end{align*}

For the lower bound in~\eqref{eq:distortionbasicembedding}, we will show that for any $\mathbf{x}, \mathbf{y} \in \R^n$, there exists $i \in \{1, \dots, k\}$ such that
    \begin{align}
        s_i^2 \cdot \gaussnormsq[s_i](\mathbf{x} - \mathbf{y}) \ge \frac{c_H}{n} \cdot \min(\dist(\mathbf{x} - \mathbf{y}, \lattice), 2^{k-1} \shvec(\lattice))^2
        \;\text.
        \label{eq:goalforlowerboundondistortion}
    \end{align}
We consider three cases.
\begin{enumerate}
    \item $\dist(\mathbf{x} - \mathbf{y}, \lattice) \le \frac{\shvec(\lattice)}{4 \sqrt{2 n}} = \frac{s}{\sqrt{2}}$.
    Note that according to Lemma~\ref{lem:smoothing},
    $s \le \frac{1}{2 \smooth(\lattice^*)}$ for some $0 < \epsilon \le \frac{1}{1000}$.
    Then by Item~2 of Lemma~\ref{lem:gaussian-bound},
    \begin{equation*}
        s^2 \cdot \gaussnormsq(\mathbf{x} - \mathbf{y})
        \ge c \cdot \dist(\mathbf{x} - \mathbf{y}, \lattice)^2
        \ge \frac{c}{n} \cdot \dist(\mathbf{x} - \mathbf{y}, \lattice)^2
        \;\text,
    \end{equation*}
    which proves~\eqref{eq:goalforlowerboundondistortion} with $i = 1$.

    \item $\frac{s}{\sqrt{2}} < \dist(\mathbf{x} - \mathbf{y}, \lattice) \le \shvec(\lattice) = 4 \sqrt{n} \cdot s$.
    By Item~3 of Lemma~\ref{lem:gaussian-bound},
    \begin{equation*}
        \begin{aligned}
            s^2 \cdot \gaussnormsq(\mathbf{x} - \mathbf{y})
            &\ge s^2 \cdot (1 - e^{-\pi / 2} - 2^{-11 n}) \\
            &= \frac{1 - e^{-\pi / 2} - 2^{-11 n}}{16 n} \cdot \shvec(\lattice)^2 \\
            &\ge \frac{1 - e^{-\pi / 2} - 2^{-11 n}}{16 n} \cdot \dist(\mathbf{x} - \mathbf{y}, \lattice)^2
            \;\text,
        \end{aligned}
    \end{equation*}
    which again proves~\eqref{eq:goalforlowerboundondistortion} with $i = 1$.

    \item $\dist(\mathbf{x} - \mathbf{y}, \lattice) > 4 \sqrt{n} \cdot s$.
    Let $j \in \{2, \dots, k\}$ be the largest index such that $2 \sqrt{n} \cdot s_j < \dist(\mathbf{x} - \mathbf{y}, \lattice)$.
    Notice that if $j<k$ then
    $4 \sqrt{n} \cdot s_j = 2 \sqrt{n} \cdot s_{j+1} \ge \dist(\mathbf{x} - \mathbf{y}, \lattice)$, and that if $j=k$,
    then $4 \sqrt{n} \cdot s_j = 2^{k-1} \shvec(\lattice)$.
    Then by Item~4 of Lemma~\ref{lem:gaussian-bound},
    \begin{equation*}
        \begin{aligned}
            s_j^2 \cdot \gaussnormsq[s_j](\mathbf{x} - \mathbf{y})
            &\ge s_j^2 \cdot (1 - 2^{-11 n}) \\
            &= \frac{1 - 2^{-11 n}}{16 n} \cdot (4 \sqrt{n} \cdot s_j)^2 \\
            &\ge \frac{1 - 2^{-11 n}}{16 n} \cdot \min(\dist(\mathbf{x} - \mathbf{y}, \lattice),
            2^{k-1} \shvec(\lattice))^2
            \;\text,
        \end{aligned}
    \end{equation*}
    which proves~\eqref{eq:goalforlowerboundondistortion} with $i = j$.
    \qedhere
\end{enumerate}
\end{proof}

\section{Embedding into Tori}\label{sec:embeddingtori}

The goal of this section is to prove Lemma~\ref{lem:filtration-distortion}, which shows that there exists an embedding from an arbitrary torus into a tuple of tori with good geometry.
The embedding is constructed based on ``good filtrations,'' which we define and instantiate in Section~\ref{sec:goodfilter}.
The definition of the embedding is given in Section~\ref{main_embedding}, and its expansion and contraction properties are shown in Section~\ref{sec:expansion} and Section~\ref{sec:contraction} respectively.
The contraction property matches the modified notion of contraction used in Lemma~\ref{lem:gaussian-distortion}.

\subsection{Good Filtrations}\label{sec:goodfilter}

In this section we define the notion of a \emph{$(q,\gamma)$-filtration} (Definition~\ref{goodfiltr}) and show how to construct a good one for every lattice (Lemma~\ref{lem:existsgoodfilter}).
We also include a small technical lemma that will be useful later (Lemma~\ref{lem:filtration-prefix}).

A \emph{filtration} of a lattice $\lattice$ is a chain of sublattices $\filtrdetail$.
We call $m$ the \emph{size} of the filtration.

\begin{definition} \label{goodfiltr}
For $q \ge 1$, $\gamma > 1$,
we say that a filtration $\filtrdetail$ is a \emph{$(q, \gamma)$-filtration} if it satisfies both
\begin{enumerate}
    \item $\cover(\lattice_j / \lattice_{j-1}) \le q \shvec(\lattice_j / \lattice_{j-1}) / 2$ for all $1 \le j \le m$, and
    \item $\shvec(\lattice_{j+1} / \lattice_j) \ge \gamma \shvec(\lattice_j / \lattice_{j-1})$ for all $1 \le j < m$.
\end{enumerate}
\end{definition}

Our construction of good filtrations is based on Korkine-Zolotarev bases, defined next. Recall first that for a sequence of vectors $(\mathbf{b}_1, \dots, \mathbf{b}_n)$, its \emph{Gram-Schmidt orthogonalization} $(\mathbf{b}'_1, \dots, \mathbf{b}'_n)$ is defined by
\begin{equation*}
    \mathbf{b}'_i = \mathbf{b}_i - \sum_{j=1}^{i-1} \mu_{i,j} \mathbf{b}'_j
    \;\text, \quad \text{where} \
    \mu_{i,j} = \frac{\langle \mathbf{b}_i, \mathbf{b}'_j \rangle}{\langle \mathbf{b}'_j, \mathbf{b}'_j\rangle}
    \;\text,
\end{equation*}
i.e., $\mathbf{b}'_i$ is the projection of $\mathbf{b}_i$ on the space orthogonal to $\linspan(\mathbf{b}_1,\ldots,\mathbf{b}_{i-1})$.

\begin{definition} \label{def:kzbasis}
A basis $(\mathbf{b}_1,\dots,\mathbf{b}_n)$
for a lattice $\lattice \subseteq \R^n$
is called a \emph{Korkine-Zolotarev basis} if
\begin{itemize}
    \item $\mathbf{b}'_i$ is a shortest vector of $\lattice/\lattice_{i-1}$ for all $1 \le i \le n$, and
    \item $|\mu_{i,j}| \le 1/2$ for all $1 \le j < i \le n$,
\end{itemize}
where $(\mathbf{b}'_1, \dots, \mathbf{b}'_n)$ is the Gram-Schmidt orthogonalization of $(\mathbf{b}_1, \dots, \mathbf{b}_n)$, $\mu_{i,j}$ are the corresponding coefficients, and $\lattice_i$ is the lattice generated by $(\mathbf{b}_1,\dots,\mathbf{b}_i)$ (with $\lattice_0 = \{\mathbf{0}\}$).
\end{definition}

It is easy to prove that a Korkine-Zolotarev basis exists for any lattice.
We remark that the second property above will not be used in this paper.

\begin{lemma} \label{lem:existsgoodfilter}
For any $n \ge 1$, lattice $\lattice \subseteq \R^n$, and $\gamma > 1$,
there exists a $(\gamma \sqrt{n}, \gamma)$-filtration of $\lattice$.
\end{lemma}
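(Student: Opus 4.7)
The plan is to build the filtration from a Korkine-Zolotarev basis of $\lattice$ by grouping consecutive basis vectors whose Gram-Schmidt lengths are within a factor $\gamma$ of each other. Let $(\mathbf{b}_1,\ldots,\mathbf{b}_n)$ be a KZ basis of $\lattice$ with Gram-Schmidt orthogonalization $(\mathbf{b}'_1,\ldots,\mathbf{b}'_n)$, put $r_i = \norm{\mathbf{b}'_i}$, and write $\widetilde{\lattice}_i$ for the sublattice generated by $\mathbf{b}_1,\ldots,\mathbf{b}_i$. The key preliminary fact I would record is that for any $0 \le j < k \le n$, a basis of $\widetilde{\lattice}_k/\widetilde{\lattice}_j$ is obtained by projecting $\mathbf{b}_{j+1},\ldots,\mathbf{b}_k$ onto $\linspan(\mathbf{b}_1,\ldots,\mathbf{b}_j)^\perp$, and a short check shows that its Gram-Schmidt orthogonalization is the truncated sequence $(\mathbf{b}'_{j+1},\ldots,\mathbf{b}'_k)$. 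In particular $\shvec(\widetilde{\lattice}_k/\widetilde{\lattice}_j) = r_{j+1}$: the upper bound because $\mathbf{b}'_{j+1}$ lies in this quotient, and the lower bound from the KZ shortness property applied to $\lattice/\widetilde{\lattice}_j \supseteq \widetilde{\lattice}_k/\widetilde{\lattice}_j$.

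I would then pick $0 = i_0 < i_1 < \cdots < i_m = n$ greedily: given $i_{j-1}$, let $i_j$ be the largest index with $i_{j-1} < i_j \le n$ such that $r_\ell < \gamma\, r_{i_{j-1}+1}$ for every $\ell \in (i_{j-1}, i_j]$, and set $\lattice_j := \widetilde{\lattice}_{i_j}$. Condition~2 of Definition~\ref{goodfiltr} follows immediately from the identification above: for $1 \le j < m$ the greedy stopping rule forces $r_{i_j+1} \ge \gamma\, r_{i_{j-1}+1}$, which reads $\shvec(\lattice_{j+1}/\lattice_j) \ge \gamma\, \shvec(\lattice_j/\lattice_{j-1})$. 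For condition~1 I would invoke the standard Babai nearest-plane covering bound $\cover(\Lambda)^2 \le \tfrac{1}{4}\sum_i \norm{\mathbf{b}^*_i}^2$ applied to $\lattice_j/\lattice_{j-1}$, whose Gram-Schmidt lengths $r_{i_{j-1}+1},\ldots,r_{i_j}$ are each at most $\gamma\, r_{i_{j-1}+1}$ by construction and number at most $n$; this yields $\cover(\lattice_j/\lattice_{j-1}) \le \tfrac{\gamma\sqrt{n}}{2}\, r_{i_{j-1}+1} = \tfrac{q}{2}\,\shvec(\lattice_j/\lattice_{j-1})$ with $q = \gamma\sqrt{n}$, as required.

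No step is particularly delicate; the substantive input is the KZ shortness property, which is what makes $\shvec(\widetilde{\lattice}_k/\widetilde{\lattice}_j)$ equal to $r_{j+1}$ and thereby lets the greedy construction enforce condition~2. Everything else is a routine check that Gram-Schmidt data pulls back cleanly to the quotient lattices, together with Babai's covering bound; there is no obstacle of substance beyond tracking these identifications carefully.
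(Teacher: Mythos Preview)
Your proposal is correct and follows essentially the same argument as the paper: take a Korkine--Zolotarev basis, greedily coarsen the induced chain by grouping indices whose Gram--Schmidt lengths are within a factor~$\gamma$, identify $\shvec(\lattice_j/\lattice_{j-1})$ with the first Gram--Schmidt length in the block, and bound the covering radius via $\cover^2 \le \tfrac14\sum \norm{\mathbf{b}'_i}^2$. The only cosmetic difference is that the paper phrases this last step through Lemma~\ref{lem:covering-additive} rather than citing Babai's nearest-plane bound, but the content is identical.
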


\begin{proof}
Let $(\mathbf{b}_1,\ldots,\mathbf{b}_n)$
be a Korkine-Zolotarev basis of $\lattice$.
Let $(\mathbf{b}'_1, \dots, \mathbf{b}'_n)$ be
its Gram-Schmidt orthogonalization, and consider
the filtration $\filtrdetail[n]$ where $\lattice_i$ is the lattice generated by $(\mathbf{b}_1,\ldots,\mathbf{b}_i)$.
From Definition~\ref{def:kzbasis} we know that $\shvec(\lattice / \lattice_{i-1}) = \norm{\mathbf{b}'_i} = \shvec(\lattice_k / \lattice_{i-1})$, for all $1 \le i \le k \le n$.
Construct a coarsening of this filtration, $\{\mathbf{0}\} = \lattice_{i_0} \subset \lattice_{i_1} \subset \dots \subset \lattice_{i_m} = \lattice$, as follows. Let $i_0 = 0$ and for $j \ge 1 $, $i_j \in \{i_{j-1}+1, \dots, n\}$ be the largest index such that $\norm{\mathbf{b}'_k} \le \gamma \norm{\mathbf{b}'_{i_{j-1}+1}}$ for all $k \in \{i_{j-1}+1, \dots, i_j\}$. Finally, stop when $i_m = n$.
We are going to show that this coarser filtration is a $(\gamma \sqrt{n}, \gamma)$-filtration.

We observe that for all $1 \le j \le m$,
\begin{equation*}
    \shvec(\lattice_{i_j} / \lattice_{i_{j-1}})
    = \norm{\mathbf{b}'_{i_{j-1}+1}}
    \;\text.
\end{equation*}
Then, by construction of the coarsening, for all $1 \le j < m$,
\begin{equation*}
    \shvec(\lattice_{i_{j+1}} / \lattice_{i_j})
    = \norm{\mathbf{b}'_{i_j+1}}
    > \gamma \norm{\mathbf{b}'_{i_{j-1}+1}}
    = \gamma \shvec(\lattice_{i_j} / \lattice_{i_{j-1}})
    \;\text.
\end{equation*}
This proves the second property of a $(\gamma \sqrt{n}, \gamma)$-filtration.
Moreover,
\begin{equation*}
    \begin{aligned}
        \cover(\lattice_{i_j} / \lattice_{i_{j-1}})^2
        &\le \sum_{k = i_{j-1}+1}^{i_j} \cover(\lattice_k / \lattice_{k-1})^2 \\
        &= \sum_{k = i_{j-1}+1}^{i_j} \normsq{\mathbf{b}'_k} / 4 \\
        &\le \sum_{k = i_{j-1}+1}^{i_j} \gamma^2 \normsq{\mathbf{b}'_{i_{j-1}+1}} / 4 \\
        &\le \gamma^2 n \cdot \shvec(\lattice_{i_j} / \lattice_{i_{j-1}})^2 / 4
        \;\text,
    \end{aligned}
\end{equation*}
where the first inequality is by Lemma~\ref{lem:covering-additive} and the second inequality is by construction of the coarsening. This proves the first property of a $(\gamma \sqrt{n}, \gamma)$-filtration.
\end{proof}

We end by proving a small property of $(q, \gamma)$-filtrations.

\begin{lemma} \label{lem:filtration-prefix}
For any $(q, \gamma)$-filtration $\filtrdetail$ and $1 \le j \le m$,
\begin{equation*}
    \cover(\lattice_j) \le \frac{q}{\sqrt{1 - 1 / \gamma^2}} \cdot \shvec(\lattice_j / \lattice_{j-1}) / 2
    \;\text.
\end{equation*}
Consequently, if $\gamma \ge 2$, then $ \cover(\lattice_j) \le q \shvec(\lattice_j / \lattice_{j-1})$.
\end{lemma}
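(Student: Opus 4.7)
The plan is to bound $\cover(\lattice_j)^2$ by a telescoping sum of the ``step'' covering radii $\cover(\lattice_k/\lattice_{k-1})^2$, then use the two defining properties of a $(q,\gamma)$-filtration to replace each step covering radius by the target shortest-vector quantity and sum a geometric series.

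First, I would apply Lemma~\ref{lem:covering-additive} iteratively along the chain $\lattice_0 \subset \lattice_1 \subset \dots \subset \lattice_j$. A straightforward induction on $j$ gives
\begin{equation*}
    \cover(\lattice_j)^2 \le \sum_{k=1}^{j} \cover(\lattice_k / \lattice_{k-1})^2
    \;\text.
\end{equation*}
Next, I would invoke property~1 of the $(q,\gamma)$-filtration to bound each term as $\cover(\lattice_k / \lattice_{k-1})^2 \le q^2 \shvec(\lattice_k / \lattice_{k-1})^2 / 4$. Property~2, applied repeatedly, then yields $\shvec(\lattice_k / \lattice_{k-1}) \le \gamma^{k-j} \shvec(\lattice_j / \lattice_{j-1})$ for every $1 \le k \le j$. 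Substituting gives
\begin{equation*}
    \cover(\lattice_j)^2 \le \frac{q^2 \shvec(\lattice_j / \lattice_{j-1})^2}{4} \sum_{k=1}^{j} \gamma^{-2(j-k)} \le \frac{q^2 \shvec(\lattice_j / \lattice_{j-1})^2}{4} \cdot \frac{1}{1 - 1/\gamma^2}
    \;\text,
\end{equation*}
where the last inequality extends the finite geometric sum to the infinite one (valid since $\gamma > 1$). Taking square roots gives the claimed bound.

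For the ``consequently'' part, when $\gamma \ge 2$ we have $1 - 1/\gamma^2 \ge 3/4$, so the leading constant is at most $\frac{q}{\sqrt{3/4}} / 2 = \frac{q}{\sqrt{3}} \le q$, which immediately yields $\cover(\lattice_j) \le q \shvec(\lattice_j / \lattice_{j-1})$.

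There is no real obstacle here; the only thing to be careful about is correctly iterating the two filtration properties in the right direction (property~2 shrinks the shortest vector as $k$ decreases from $j$, which is exactly what makes the geometric series converge) and invoking Lemma~\ref{lem:covering-additive} $j-1$ times to obtain the telescoped bound.
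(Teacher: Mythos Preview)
Your proof is correct and follows exactly the paper's own argument: iterate Lemma~\ref{lem:covering-additive} to bound $\cover(\lattice_j)^2$ by the sum of step covering radii, apply the two filtration properties, and sum the resulting geometric series. The ``consequently'' part is also handled the same way (the paper leaves the arithmetic implicit, but your bound via $1-1/\gamma^2 \ge 3/4$ is exactly what is needed).
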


\begin{proof}
The inequality can be proved as follows:
\begin{equation*}
    \begin{aligned}
        \cover^2(\lattice_j)
        &\le \sum_{i = 1}^j \cover^2(\lattice_i / \lattice_{i-1}) \\
        &\le \sum_{i = 1}^j q^2 \shvec^2(\lattice_i / \lattice_{i-1}) / 4 \\
        &\le \sum_{i = 1}^j \frac{q^2}{\gamma^{2 (j - i)}} \cdot \shvec^2(\lattice_j / \lattice_{j-1}) / 4 \\
        &\le \frac{q^2}{1 - 1 / \gamma^2} \cdot \shvec^2(\lattice_j / \lattice_{j-1}) / 4
        \;\text,
    \end{aligned}
\end{equation*}
where the first inequality uses Lemma~\ref{lem:covering-additive}, the second inequality uses the first property in Definition~\ref{goodfiltr}, and the third inequality uses the second property in Definition~\ref{goodfiltr}.
\end{proof}

\subsection{The Embedding}\label{main_embedding}

Let $\filtr$ be a filtration $\filtrdetail$ of a lattice $\lattice \subseteq \R^n$.
The filtration naturally induces an orthogonal decomposition of $\R^n$ into $m$ subspaces, namely, $\linspan(\lattice_j / \lattice_{j-1})$ for $j=1,\ldots,m$.
We use $\filtrproj$ to denote $\pi_{\linspan(\lattice_j / \lattice_{j-1})}$, the projection on the $j$-th subspace. We will similarly use $\filtrsuffixeq$, $\filtrprefixne$, $\filtrsuffixne$, and $\filtrprefixeq$ to denote projections on the span of prefixes and suffixes of this decomposition. Specifically, for $1 \le j \le m$ we have
$\filtrsuffixeq = \pi_{\linspan(\lattice / \lattice_{j-1})}$,
$\filtrprefixne = \pi_{\linspan(\lattice_{j-1})}$,
$\filtrsuffixne = \pi_{\linspan(\lattice / \lattice_j)}$, and
$\filtrprefixeq = \pi_{\linspan(\lattice_j)}$.

\begin{definition} \label{def-embedding-lattice-component}
For filtration $\filtr$ of size $m$, $0 < \alpha < 1$, and $1 \le j \le m$, the embedding $\filtrembed$ is defined by
\begin{equation*}
    \filtrembed(\mathbf{x}) = \sum_{i = j}^m \alpha^{i - j} \filtrproj[i](\mathbf{x})
    \;\text.
\end{equation*}
\end{definition}

Note that since $\filtrembed$ is linear, for any lattice $\lattice \subseteq \R^n$ and vector $\mathbf{x} \in \R^n$, $\filtrembed(\mathbf{x} + \lattice) = \filtrembed(\mathbf{x}) + \filtrembed(\lattice)$, and thus $\filtrembed$ is a well-defined embedding from the torus $\torus$ to the torus $\filtrembed(\torus)$.

\begin{definition} \label{def-embedding-lattice}
For a filtration $\filtr$ of size $m$ and $0 < \alpha < 1$, the embedding $\fullfiltrembed$ is defined by $\fullfiltrembed = (\filtrembed[1], \dots, \filtrembed[m])$
with the metric being $\ell_2$ of the tori metrics.
\end{definition}

\subsection{Expansion of the Embedding} \label{sec:expansion}

\begin{definition}[Realization of distance in torus]
For any $n \ge 1$, lattice $\lattice \subseteq \R^n$, and vector $\mathbf{x} \in \R^n$, since $\dist(\mathbf{x}, \lattice) = \min_{\mathbf{v} \in \lattice} \norm{\mathbf{x} - \mathbf{v}}$, there always exists $\mathbf{v} \in \lattice$ such that $\dist(\mathbf{x}, \lattice) = \norm{\mathbf{x} - \mathbf{v}}$. We say such minimizer $\mathbf{v}$ \emph{realizes} the distance $\dist(\mathbf{x}, \lattice)$. Similarly, for vectors $\mathbf{x}, \mathbf{y} \in \R^n$, we say $\mathbf{v}$ \emph{realizes} the distance $\dist_{\torus}(\mathbf{x}, \mathbf{y})$ if $\dist_{\torus}(\mathbf{x}, \mathbf{y}) = \norm{\mathbf{x} - \mathbf{y} - \mathbf{v}}$.
\end{definition}

\begin{lemma}[Expansion of the embedding] \label{lem:filtration-upper}
For any $n \ge 1$, lattice $\lattice \subseteq \R^n$ with filtration $\filtr$, $0 < \alpha < 1$, and vectors $\mathbf{x}, \mathbf{y} \in \R^n$,
\begin{align*}
    \dist_{\fullfiltrembed(\torus)}(\fullfiltrembed(\mathbf{x}), \fullfiltrembed(\mathbf{y}))^2
    &:= \sum_{j = 1}^m \dist_{\filtrembed(\torus)}(\filtrembed(\mathbf{x}), \filtrembed(\mathbf{y}))^2   \\
    &\le
    \frac{1}{1 - \alpha^2} \cdot \dist_{\torus}(\mathbf{x}, \mathbf{y})^2
    \;\text.
\end{align*}
\end{lemma}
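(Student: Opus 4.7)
The plan is straightforward because both $\fullfiltrembed$ and each $\filtrembed$ are linear maps that send $\lattice$ to the respective image lattice. First, let $\mathbf{v} \in \lattice$ realize the torus distance $\dist_\torus(\mathbf{x}, \mathbf{y}) = \norm{\mathbf{x} - \mathbf{y} - \mathbf{v}}$, and set $\mathbf{z} := \mathbf{x} - \mathbf{y} - \mathbf{v}$. Since $\filtrembed(\mathbf{v}) \in \filtrembed(\lattice)$, the torus distance $\dist_{\filtrembed(\torus)}(\filtrembed(\mathbf{x}), \filtrembed(\mathbf{y}))$ is bounded above by the Euclidean norm $\norm{\filtrembed(\mathbf{z})}$. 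So it suffices to show that $\sum_{j=1}^m \norm{\filtrembed(\mathbf{z})}^2 \le \frac{1}{1-\alpha^2} \norm{\mathbf{z}}^2$.

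The second step uses that the subspaces $\linspan(\lattice_i / \lattice_{i-1})$, for $i = 1, \ldots, m$, are pairwise orthogonal and together span $\R^n$ (since the filtration reaches $\lattice$, which has full rank). Hence for every $j$,
\begin{equation*}
    \norm{\filtrembed(\mathbf{z})}^2 = \Bigl\| \sum_{i = j}^m \alpha^{i-j} \filtrproj[i](\mathbf{z}) \Bigr\|^2 = \sum_{i = j}^m \alpha^{2(i-j)} \norm{\filtrproj[i](\mathbf{z})}^2
    \;\text,
\end{equation*}
because the $\filtrproj[i](\mathbf{z})$ lie in mutually orthogonal subspaces. Similarly, $\norm{\mathbf{z}}^2 = \sum_{i=1}^m \norm{\filtrproj[i](\mathbf{z})}^2$.

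The third step is to swap the order of summation and evaluate a geometric series:
\begin{equation*}
    \sum_{j=1}^m \norm{\filtrembed(\mathbf{z})}^2
    = \sum_{j=1}^m \sum_{i=j}^m \alpha^{2(i-j)} \norm{\filtrproj[i](\mathbf{z})}^2
    = \sum_{i=1}^m \norm{\filtrproj[i](\mathbf{z})}^2 \sum_{j=1}^i \alpha^{2(i-j)}
    \le \frac{1}{1-\alpha^2} \sum_{i=1}^m \norm{\filtrproj[i](\mathbf{z})}^2
    \;\text,
\end{equation*}
and the last sum equals $\norm{\mathbf{z}}^2 = \dist_\torus(\mathbf{x}, \mathbf{y})^2$, which completes the proof.

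There is no real obstacle here: the orthogonality of the projections, linearity of $\filtrembed$, and a finite geometric series do all the work. The only subtlety worth flagging is the very first move, namely that even though the target metric is a torus distance (not just a Euclidean norm), we can upper bound it by choosing the single lattice coset representative $\mathbf{v}$ that already realizes the source torus distance; no separate minimization in each coordinate $j$ is needed.
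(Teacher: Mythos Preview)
Your proof is correct and follows essentially the same approach as the paper: pick $\mathbf{v}\in\lattice$ realizing $\dist_{\torus}(\mathbf{x},\mathbf{y})$, bound each torus distance by $\norm{\filtrembed(\mathbf{x}-\mathbf{y}-\mathbf{v})}$, expand via orthogonality of the $\filtrproj[i]$, and sum the resulting geometric series. The only cosmetic difference is that you fix $\mathbf{v}$ at the outset and explicitly swap the order of summation, whereas the paper bounds for arbitrary $\mathbf{v}$ first and specializes at the end.
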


\begin{proof}
Let $m$ be the size of $\filtr$.
For all $\mathbf{v} \in \lattice$,
the embedded distance can be bounded from above by
\begin{equation*}
    \begin{aligned}
        \sum_{j = 1}^m \dist_{\filtrembed(\torus)}(\filtrembed(\mathbf{x}), \filtrembed(\mathbf{y}))^2
        &\le \sum_{j = 1}^m \normsq{\filtrembed(\mathbf{x} - \mathbf{y} - \mathbf{v})} \\
        &= \sum_{j = 1}^m \sum_{i = j}^m \alpha^{2(i - j)} \normsq{\filtrproj[i](\mathbf{x} - \mathbf{y} - \mathbf{v})} \\
        &\le \frac{1}{1 - \alpha^2} \cdot \sum_{i = 1}^m \normsq{\filtrproj[i](\mathbf{x} - \mathbf{y} - \mathbf{v})} \\
        &= \frac{1}{1 - \alpha^2} \cdot \normsq{\mathbf{x} - \mathbf{y} - \mathbf{v}}
        \;\text,
    \end{aligned}
\end{equation*}
which, for $\mathbf{v}$ realizing $\dist_{\torus}(\mathbf{x}, \mathbf{y})$, gives $\frac{1}{1 - \alpha^2} \cdot \dist_{\torus}(\mathbf{x}, \mathbf{y})^2$ as desired.
\end{proof}

\subsection{Contraction of the Embedding} \label{sec:contraction}

\begin{lemma}\label{lem:closest-change}
For any $n \ge 1$, lattice $\lattice \subseteq \R^n$, lattice point $\mathbf{v}' \in \lattice$ realizing $\dist(\mathbf{x}, \lattice)$, and
lattice point $\mathbf{v} \in \lattice$,
\begin{equation*}
    \norm{\mathbf{x} - \mathbf{v}} \ge \frac{1}{2} \norm{\mathbf{v} - \mathbf{v}'}
    \;\text.
\end{equation*}
Consequently, if $\mathbf{v}$ does not realize $\dist(\mathbf{x}, \lattice)$, then $\mathbf{v} \ne \mathbf{v}'$ and
\begin{equation*}
    \norm{\mathbf{x} - \mathbf{v}} \ge \frac{1}{2} \shvec(\lattice)
    \;\text.
\end{equation*}
\end{lemma}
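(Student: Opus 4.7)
The plan is to derive the first inequality directly from the triangle inequality and the minimizing property of $\mathbf{v}'$, and then deduce the consequence from the definition of the shortest vector. The main (and only) ingredient is that $\mathbf{v}'$ realizes $\dist(\mathbf{x}, \lattice)$, which means $\norm{\mathbf{x} - \mathbf{v}'} \le \norm{\mathbf{x} - \mathbf{w}}$ for every $\mathbf{w} \in \lattice$, and in particular $\norm{\mathbf{x} - \mathbf{v}'} \le \norm{\mathbf{x} - \mathbf{v}}$.

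For the first bound, I would apply the triangle inequality to the three points $\mathbf{v}, \mathbf{x}, \mathbf{v}'$:
\begin{equation*}
    \norm{\mathbf{v} - \mathbf{v}'} \le \norm{\mathbf{x} - \mathbf{v}} + \norm{\mathbf{x} - \mathbf{v}'} \le 2 \norm{\mathbf{x} - \mathbf{v}}
    \;\text,
\end{equation*}
where the second step uses that $\mathbf{v}'$ realizes the distance from $\mathbf{x}$ to $\lattice$. Dividing by $2$ gives the desired inequality.

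For the consequence, I would argue contrapositively: if $\mathbf{v} = \mathbf{v}'$, then $\mathbf{v}$ trivially realizes the distance, so the assumption that $\mathbf{v}$ does not realize it forces $\mathbf{v} \ne \mathbf{v}'$. Since $\mathbf{v}, \mathbf{v}' \in \lattice$, their difference is a nonzero lattice vector, hence $\norm{\mathbf{v} - \mathbf{v}'} \ge \shvec(\lattice)$. Substituting into the first inequality yields $\norm{\mathbf{x} - \mathbf{v}} \ge \frac{1}{2} \shvec(\lattice)$. There is no real obstacle here; the statement is a clean two-line consequence of the triangle inequality and the definition of $\shvec$.
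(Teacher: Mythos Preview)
Your proposal is correct and matches the paper's own proof essentially line for line: both use the minimizing property of $\mathbf{v}'$ together with the triangle inequality to get $\norm{\mathbf{v} - \mathbf{v}'} \le 2\norm{\mathbf{x} - \mathbf{v}}$. Your handling of the ``Consequently'' part is exactly the intended (and only natural) argument, which the paper leaves implicit.
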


\begin{proof}
By definition, $\norm{\mathbf{x} - \mathbf{v}'} \le \norm{\mathbf{x} - \mathbf{v}}$.
Then by the triangle inequality, $\norm{\mathbf{v} - \mathbf{v}'} \le \norm{\mathbf{x} - \mathbf{v}} + \norm{\mathbf{x} - \mathbf{v}'} \le 2 \norm{\mathbf{x} - \mathbf{v}}$, as desired.
\end{proof}

\begin{lemma} \label{lem:shortest-untouched}
For any $(q, \gamma)$-filtration $\filtr$ given by $\filtrdetail$, $1 / \gamma \le \alpha < 1$, and $1 \le j \le m$,
$\shvec(\filtrembed(\lattice)) = \shvec(\lattice_j / \lattice_{j-1})$.
\end{lemma}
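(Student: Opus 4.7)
My plan is to prove the equality by establishing matching upper and lower bounds on $\shvec(\filtrembed(\lattice))$.

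For the upper bound, I would exhibit a witness of the right length. Let $\mathbf{v} \in \lattice_j$ be any lift of a shortest nonzero element of $\lattice_j / \lattice_{j-1}$, so that $\filtrproj[j](\mathbf{v})$ has norm exactly $\shvec(\lattice_j / \lattice_{j-1})$. Because $\lattice_j \subseteq \linspan(\lattice_{i-1})$ for every $i > j$, each projection $\filtrproj[i](\mathbf{v})$ with $i > j$ vanishes. Hence $\filtrembed(\mathbf{v}) = \filtrproj[j](\mathbf{v})$ is a nonzero element of $\filtrembed(\lattice)$ of norm $\shvec(\lattice_j / \lattice_{j-1})$.

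For the lower bound, I would pick an arbitrary $\mathbf{v} \in \lattice$ with $\filtrembed(\mathbf{v}) \neq \mathbf{0}$ and let $i^* \in \{j, \ldots, m\}$ be the largest index for which $\filtrproj[i^*](\mathbf{v}) \neq \mathbf{0}$. Since $\filtrproj[i](\mathbf{v}) = \mathbf{0}$ for all $i > i^*$, the vector $\mathbf{v}$ has no component outside $\linspan(\lattice_{i^*})$, so $\mathbf{v} \in \lattice \cap \linspan(\lattice_{i^*}) = \lattice_{i^*}$ by primitivity of $\lattice_{i^*} \subseteq \lattice$. Therefore $\filtrproj[i^*](\mathbf{v})$ is a nonzero element of the quotient lattice $\lattice_{i^*} / \lattice_{i^*-1}$ and so has norm at least $\shvec(\lattice_{i^*} / \lattice_{i^*-1})$. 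Iterating the second property of a $(q, \gamma)$-filtration yields $\shvec(\lattice_{i^*} / \lattice_{i^*-1}) \ge \gamma^{i^* - j} \shvec(\lattice_j / \lattice_{j-1})$, and by orthogonality of the subspaces $\linspan(\lattice_i / \lattice_{i-1})$,
\begin{equation*}
    \normsq{\filtrembed(\mathbf{v})} \ge \alpha^{2(i^* - j)} \normsq{\filtrproj[i^*](\mathbf{v})} \ge (\alpha \gamma)^{2(i^* - j)} \shvec(\lattice_j / \lattice_{j-1})^2 \ge \shvec(\lattice_j / \lattice_{j-1})^2
    \;\text,
\end{equation*}
where the final step uses $\alpha \gamma \ge 1$, which is exactly the hypothesis $\alpha \ge 1/\gamma$.

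The delicate part is the lower bound: the shortest-vector lengths of the quotients $\lattice_i / \lattice_{i-1}$ grow geometrically by a factor of at least $\gamma$ as $i$ increases, while the weights $\alpha^{i - j}$ decay; the hypothesis $\alpha \gamma \ge 1$ is calibrated so that these two effects cancel, ensuring that every ``layer'' contributes at least as much as the base layer $j$. The use of primitivity is what lets us interpret $\filtrproj[i^*](\mathbf{v})$ as an honest element of the quotient lattice $\lattice_{i^*}/\lattice_{i^*-1}$, which is what makes the shortest-vector bound available at all.
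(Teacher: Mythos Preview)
Your proof is correct and is essentially the same as the paper's: the paper packages the lower bound as a backwards induction on $j$ via the identity $\filtrembed = \filtrproj + \alpha\,\filtrembed[j+1]$, whereas you unroll that induction by directly locating the maximal index $i^*$ with $\filtrproj[i^*](\mathbf{v})\neq\mathbf{0}$ and invoking primitivity there. Both arguments hinge on the same mechanism, namely that the geometric growth $\shvec(\lattice_{i}/\lattice_{i-1})\ge\gamma\,\shvec(\lattice_{i-1}/\lattice_{i-2})$ compensates the decay $\alpha^{i-j}$ once $\alpha\gamma\ge 1$.
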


\begin{proof}
We prove the claim by induction on $j$. When $j = m$, $\filtrembed[m](\lattice) = \lattice_m / \lattice_{m-1}$, and thus the claim holds trivially.

Suppose the claim holds for $j+1$.
Then for $j$, note that $\lattice_j / \lattice_{j-1} \subseteq \filtrembed(\lattice)$. Therefore $\shvec(\filtrembed(\lattice))$ is the minimum of $\shvec(\lattice_j / \lattice_{j-1})$ and the minimum length of vectors in the set $\filtrembed(\lattice) \setminus (\lattice_j / \lattice_{j-1})$.
Since $\filtrembed = \filtrproj + \alpha \filtrembed[j+1]$, the length of any vector in $\filtrembed(\lattice) \setminus (\lattice_j / \lattice_{j-1})$ is bounded from below by
\begin{equation*}
    \begin{aligned}
        \alpha \shvec(\filtrembed[j+1](\lattice))
        &= \alpha \shvec(\lattice_{j+1} / \lattice_j) \\
        &\ge \alpha \gamma \shvec(\lattice_j / \lattice_{j-1}) \\
        &\ge \shvec(\lattice_j / \lattice_{j-1})
        \;\text,
    \end{aligned}
\end{equation*}
where the equality is the induction assumption and the first inequality uses the second property in Definition~\ref{goodfiltr}.
Hence $\shvec(\filtrembed(\lattice)) = \shvec(\lattice_j / \lattice_{j-1})$, as desired.
\end{proof}

Combining Lemma~\ref{lem:shortest-untouched} with Lemma~\ref{lem:filtration-prefix} as well as Definition~\ref{goodfiltr}, we immediately get the following corollary.

\begin{corollary} \label{cor:filtr}
For any $(q, \gamma)$-filtration $\filtr$ given by $\filtrdetail$ with $\gamma \ge 2$ and $1 / \gamma \le \alpha < 1$,
\begin{enumerate}
    \item $\cover(\lattice_j) \le q \shvec(\filtrembed(\lattice))$ for all $1 \le j \le m$, and
    \item $\shvec(\filtrembed[j+1](\lattice)) \ge \gamma \shvec(\filtrembed(\lattice))$ for all $1 \le j < m$.
\end{enumerate}
\end{corollary}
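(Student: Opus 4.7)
The corollary is essentially a chain of substitutions that rewrites the bounds from Lemma~\ref{lem:filtration-prefix} and Definition~\ref{goodfiltr} in terms of the quantity $\shvec(\filtrembed(\lattice))$, using Lemma~\ref{lem:shortest-untouched} as the translation key. Since the hypotheses $\gamma \ge 2$ and $1/\gamma \le \alpha < 1$ are exactly what the three cited results require, there is no new content to prove; the plan is simply to line up the three ingredients and apply them.

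The plan is to first invoke Lemma~\ref{lem:shortest-untouched}, whose hypothesis $1/\gamma \le \alpha < 1$ is satisfied, to obtain the identity $\shvec(\filtrembed(\lattice)) = \shvec(\lattice_j/\lattice_{j-1})$ for each $1 \le j \le m$. This identity is the only bridge needed between the abstract quantity $\shvec(\filtrembed(\lattice))$ appearing in the statement and the more concrete quantity $\shvec(\lattice_j/\lattice_{j-1})$ controlled by the filtration axioms.

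For item 1, I would then apply Lemma~\ref{lem:filtration-prefix}: because $\gamma \ge 2$, the second conclusion of that lemma gives $\cover(\lattice_j) \le q \shvec(\lattice_j/\lattice_{j-1})$, and rewriting the right-hand side using the identity from Lemma~\ref{lem:shortest-untouched} produces $\cover(\lattice_j) \le q \shvec(\filtrembed(\lattice))$, as required. For item 2, I would apply Lemma~\ref{lem:shortest-untouched} twice, once at index $j$ and once at index $j+1$, to get $\shvec(\filtrembed[j+1](\lattice)) = \shvec(\lattice_{j+1}/\lattice_j)$ and $\shvec(\filtrembed(\lattice)) = \shvec(\lattice_j/\lattice_{j-1})$; then the second property of a $(q,\gamma)$-filtration (Definition~\ref{goodfiltr}, item~2) directly yields $\shvec(\lattice_{j+1}/\lattice_j) \ge \gamma \shvec(\lattice_j/\lattice_{j-1})$, which is the desired inequality after substitution.

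There is no real obstacle here: the only thing to check is that the quantifier ranges match (Lemma~\ref{lem:shortest-untouched} is stated for $1 \le j \le m$, Lemma~\ref{lem:filtration-prefix} likewise, and Definition~\ref{goodfiltr}'s second property is stated for $1 \le j < m$, which matches the range in item 2 of the corollary). The hypothesis $\gamma \ge 2$ is used only for item~1 to drop the $1/\sqrt{1-1/\gamma^2}$ factor in Lemma~\ref{lem:filtration-prefix}, while $\alpha \ge 1/\gamma$ is used for both items through Lemma~\ref{lem:shortest-untouched}. Accordingly the proof can be left to the reader in one or two lines, or omitted entirely as the phrase ``immediately get the following corollary'' already does.
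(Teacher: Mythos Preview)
Your proposal is correct and matches the paper's approach exactly: the paper simply states that the corollary follows immediately by combining Lemma~\ref{lem:shortest-untouched} with Lemma~\ref{lem:filtration-prefix} and Definition~\ref{goodfiltr}, and your write-up spells out precisely those substitutions.
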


\begin{lemma}[Contraction of the embedding] \label{lem:filtration-lower}
For any $n \ge 1$, lattice $\lattice \subseteq \R^n$ with $(q, \gamma)$-filtration $\filtr$ of size $m$ satisfying $\gamma \ge 2$ and $q \le \gamma^2 / 32$, $\frac{1}{2} \le \alpha < 1$, and vectors $\mathbf{x}, \mathbf{y} \in \R^n$,
\begin{equation} \label{eqn:lower}
    \sum_{j = 1}^m \min \bigl( \dist_{\filtrembed(\torus)}(\filtrembed(\mathbf{x}), \filtrembed(\mathbf{y})), q^2 \shvec(\filtrembed(\lattice)) \bigr)^2 \ge c_E \cdot \dist_{\torus}(\mathbf{x}, \mathbf{y})^2
    \;\text,
\end{equation}
where $c_E>0$ is an absolute constant.
\end{lemma}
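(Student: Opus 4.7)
The setup is to work in a shifted coordinate system: let $\mathbf{v}^* \in \lattice$ realize $\dist(\mathbf{x}-\mathbf{y}, \lattice)$ and set $\mathbf{w} := \mathbf{x} - \mathbf{y} - \mathbf{v}^*$, so that $\|\mathbf{w}\| = D := \dist_\torus(\mathbf{x},\mathbf{y})$ and $\mathbf{0}$ realizes $\dist(\mathbf{w},\lattice)$. Write $\sigma_j := \shvec(\lattice_j/\lattice_{j-1}) = \shvec(\filtrembed(\lattice))$, the latter by Lemma~\ref{lem:shortest-untouched} (applicable since $\alpha \ge 1/2 \ge 1/\gamma$). For each $j$ let $\mathbf{u}_j \in \lattice$ realize $d_j := \|\filtrembed(\mathbf{w}-\mathbf{u}_j)\|$; since $\lattice_{j-1}$ lies in the kernel of $\filtrembed$, I may shift $\mathbf{u}_j$ by the closest element of $\lattice_{j-1}$ to additionally ensure $\|\filtrprefixne[j](\mathbf{u}_j)\| \le \cover(\lattice_{j-1})$ without altering $d_j$.

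The two quantitative inputs I plan to use are: (i)~Lemma~\ref{lem:Voronoi} applied with $\mathbf{v} = \mathbf{0}$ and sublattice $\lattice_{j-1}$, combined with Corollary~\ref{cor:filtr} (valid since $\gamma \ge 2$), yielding $\|\filtrprefixne[j](\mathbf{w})\| \le \cover(\lattice_{j-1}) \le q\sigma_{j-1} \le q\sigma_j/\gamma$, and hence $\|\filtrprefixne[j](\mathbf{w}-\mathbf{u}_j)\| \le 2q\sigma_j/\gamma$ by the triangle inequality; and (ii)~since $\mathbf{0}$ realizes $\dist(\mathbf{w},\lattice)$ we have $\|\mathbf{w}-\mathbf{u}_j\| \ge D$, so by Pythagoras
\[
    \|\filtrsuffixeq[j](\mathbf{w}-\mathbf{u}_j)\|^2 \ge D^2 - 4q^2\sigma_j^2/\gamma^2.
\]
Define the critical level $j^*$ as the largest $j \in \{1,\dots,m\}$ with $\sigma_j \le \gamma D/(4q)$, or $j^* := 0$ if none exists. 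At $j = j^*$ the error in~(ii) is at most $D^2/4$, so $\|\filtrsuffixeq[j^*](\mathbf{w}-\mathbf{u}_{j^*})\|^2 \ge 3D^2/4$. I will leverage this — together with the undiluted weight $\alpha^{2(i-j^*)} = 1$ at $i = j^*$ in the expansion $d_{j^*}^2 = \sum_{i \ge j^*}\alpha^{2(i-j^*)}\|\filtrproj[i](\mathbf{w}-\mathbf{u}_{j^*})\|^2$ — to extract $d_{j^*}^2 = \Omega(D^2)$. Conversely, for $j > j^*$ the cap already satisfies $q^2\sigma_j \ge q\gamma D/4 \ge D/2$ (using $q \ge 1$ and $\gamma \ge 2$), so any level at which $d_j$ saturates the cap contributes $\Omega(D^2)$ to the sum on its own. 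The assumption $q \le \gamma^2/32$ is invoked to ensure the geometric growth $\sigma_{j+1} \ge \gamma\sigma_j$ outpaces $q$ so that the critical level is well-positioned and the residual geometric errors all close at absolute constants.

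The main obstacle is the regime in which $\filtrembed(\mathbf{u}_{j^*}) \ne \mathbf{0}$: a nonzero embedded-lattice translate lies close to $\filtrembed[j^*](\mathbf{w})$, and $d_{j^*}$ could in principle be much smaller than $\|\filtrembed[j^*](\mathbf{w})\|$, rendering the naive $\alpha^{2(i-j^*)} = 1$ trick insufficient. The cap $q^2 \sigma_j$ in the statement is precisely what compensates: the offending translate must have nonzero projection at some level $k \ge j^*$, which — via Lemma~\ref{lem:closest-change} applied to $\filtrembed[k](\mathbf{w})$ and $\filtrembed[k](\lattice)$ — forces $d_k$ to either be nearly cap-sized (contributing $\Omega(D^2)$ at level $k$) or to require further cancellation that propagates the bound up the filtration. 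Reconciling the contribution at $j^*$ with the caps at higher levels while carefully tracking the $\alpha^{2(i-j)}$ weights so that no scale is double-counted or lost is the technical heart of the proof, and the tight relation $q \le \gamma^2/32$ is exactly what makes the bookkeeping close with an absolute constant $c_E$.
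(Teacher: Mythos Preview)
Your plan hinges on extracting $d_{j^*}^2 = \Omega(D^2)$, and this is where it breaks. The inequality $\|\filtrsuffixeq[j^*](\mathbf{w}-\mathbf{u}_{j^*})\|^2 \ge 3D^2/4$ controls the \emph{unweighted} suffix norm, whereas $d_{j^*}^2 = \sum_{i \ge j^*}\alpha^{2(i-j^*)}\|\filtrproj[i](\mathbf{w}-\mathbf{u}_{j^*})\|^2$; the ``undiluted weight at $i=j^*$'' buys you only the single term $\|\filtrproj[j^*](\mathbf{w}-\mathbf{u}_{j^*})\|^2$, and nothing in your setup forces the suffix mass to sit near level $j^*$. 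Concretely, take $\mathbf{w}\in\linspan(\lattice_m/\lattice_{m-1})$ with $D$ in the window $4q/\gamma \le D < 4q$ so that $j^*=1$, while $m$ is large (all hypotheses permit this). Then $\mathbf{u}_{j^*}=\mathbf{0}$ is optimal and $d_{j^*}=\alpha^{m-1}D$, which is exponentially small in $m$. Your ``main obstacle'' paragraph misdiagnoses the difficulty --- this failure occurs with $\filtrembed[j^*](\mathbf{u}_{j^*})=\mathbf{0}$ --- and your remark about levels $j>j^*$ only says the caps there are large, never that any $d_j$ actually is. A scale-defined index cannot locate the mass of $\mathbf{w}$.

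The paper's argument chooses its pivot adaptively from the \emph{minimizers}, not the scales: $j_1$ is the smallest index such that the true closest vector $\mathbf{v}$ (your $\mathbf{v}^*$) realizes the distance in $\filtrembed(\torus)$ for every $j>j_1$. Because the same $\mathbf{v}$ is optimal across that whole range, one may sum the trivial bounds $\min(d_j,q^2\sigma_j)^2 \ge \|\filtrproj[j](\mathbf{w})\|^2$ over all $j>j_1$ to obtain $\|\filtrsuffixne[j_1](\mathbf{w})\|^2$, recovering precisely the high-level mass your single-index approach discards. Only when that sum falls below $D^2/2$ does the paper drop to a single level $j_0\in\{j_1-1,j_1\}$, and there the very definition of $j_1$ supplies the needed leverage: $\filtrembed[j_1](\mathbf{v})$ is \emph{not} a minimizer, so Lemma~\ref{lem:closest-change} gives $\|\filtrembed[j_1](\mathbf{w})\|\ge\sigma_{j_1}/2$, which together with $\|\filtrprefixeq[j_1](\mathbf{w})\|^2 > D^2/2$ anchors both the cap and the distance at $j_0$ to $\Omega(D^2)$. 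Your $j^*$ carries no information about where $\mathbf{v}$ ceases to be optimal and so cannot play this role.
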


\begin{proof}
For simplicity, we omit the subscript $\filtr$ in the notations $\filtrproj$, $\filtrsuffixeq$, $\filtrprefixne$, $\filtrsuffixne$, $\filtrprefixeq$, $\filtrembed$ and $\fullfiltrembed$ in this proof.

\renewcommand{\filtrproj}{\simplefiltrproj}
\renewcommand{\filtrsuffixeq}{\simplefiltrsuffixeq}
\renewcommand{\filtrprefixne}{\simplefiltrprefixne}
\renewcommand{\filtrsuffixne}{\simplefiltrsuffixne}
\renewcommand{\filtrprefixeq}{\simplefiltrprefixeq}
\renewcommand{\filtrembed}{\simplefiltrembed}
\renewcommand{\fullfiltrembed}{\simplefullfiltrembed}

Let $\mathbf{v} \in \lattice$ be a lattice point that realizes $\dist_{\torus}(\mathbf{x}, \mathbf{y})$. Then $\dist_{\torus}(\mathbf{x}, \mathbf{y}) = \norm{\mathbf{x} - \mathbf{y} - \mathbf{v}}$.
Hence our goal is equivalently to show that the left-hand side of \eqref{eqn:lower} satisfies
\begin{equation} \label{eqn:lower-v}
    \sum_{j = 1}^m \min \bigl( \dist_{\filtrembed(\torus)}(\filtrembed(\mathbf{x}), \filtrembed(\mathbf{y})), q^2 \shvec(\filtrembed(\lattice)) \bigr)^2 \ge c_E \cdot \normsq{\mathbf{x} - \mathbf{y} - \mathbf{v}}
    \;\text.
\end{equation}

Let $j_1 \in \{0, 1, \dots, m\}$ be the smallest index satisfying that for all $j \in \{j_1 + 1, \dots, m\}$, $\filtrembed(\mathbf{v})$ realizes $\dist_{\filtrembed(\torus)}(\filtrembed(\mathbf{x}), \filtrembed(\mathbf{y}))$.
Then for all $j \in \{j_1 + 1, \dots, m\}$,
\begin{equation} \label{eqn:lower-dist}
    \begin{aligned}
        \dist_{\filtrembed(\torus)}(\filtrembed(\mathbf{x}), \filtrembed(\mathbf{y}))
        &= \norm{\filtrembed(\mathbf{x} - \mathbf{y} - \mathbf{v})} \\
        &\ge \norm{\filtrproj(\mathbf{x} - \mathbf{y} - \mathbf{v})}
        \;\text.
    \end{aligned}
\end{equation}
Moreover, according to Lemma~\ref{lem:Voronoi} and Corollary~\ref{cor:filtr},
\begin{equation} \label{eqn:lower-proj}
    \norm{\filtrproj(\mathbf{x} - \mathbf{y} - \mathbf{v})}
    \le \norm{\filtrprefixeq(\mathbf{x} - \mathbf{y} - \mathbf{v})}
    \le \cover(\lattice_j)
    \le q \shvec(\filtrembed(\lattice)) \le q^2 \shvec(\filtrembed(\lattice))
    \;\text.
\end{equation}
Combining \eqref{eqn:lower-dist} and \eqref{eqn:lower-proj}, the left-hand side of \eqref{eqn:lower-v} is bounded from below by
\begin{equation} \label{eqn:lower-sum}
    \sum_{j = j_1 + 1}^m \normsq{\filtrproj(\mathbf{x} - \mathbf{y} - \mathbf{v})}
    = \normsq{\filtrsuffixne[j_1](\mathbf{x} - \mathbf{y} - \mathbf{v})}
    \;\text.
\end{equation}
If it is the case that
\begin{equation*}
    \normsq{\filtrsuffixne[j_1](\mathbf{x} - \mathbf{y} - \mathbf{v})}
    \ge \frac{1}{2} \normsq{\mathbf{x} - \mathbf{y} - \mathbf{v}}
    \;\text,
\end{equation*}
then \eqref{eqn:lower-sum} clearly suffices to prove \eqref{eqn:lower-v}.
So from now on we assume that
\begin{align} \label{eqn:case-condition}
    \normsq{\filtrsuffixne[j_1](\mathbf{x} - \mathbf{y} - \mathbf{v})}
    &< \frac{1}{2} \normsq{\mathbf{x} - \mathbf{y} - \mathbf{v}}
    \;\text, \notag \\ \text{i.e., }
    \normsq{\filtrprefixeq[j_1](\mathbf{x} - \mathbf{y} - \mathbf{v})}
    &> \frac{1}{2} \normsq{\mathbf{x} - \mathbf{y} - \mathbf{v}}
    \;\text.
\end{align}
In particular, $j_1 > 0$.
Then, by definition, $\filtrembed[j_1](\mathbf{v})$ does not realize $\dist_{\filtrembed[j_1](\torus)}(\filtrembed[j_1](\mathbf{x}), \filtrembed[j_1](\mathbf{y}))$, which, by using Lemma~\ref{lem:closest-change} with lattice $\filtrembed[j_1](\lattice)$, implies
\begin{equation} \label{eqn:lower-change}
    \norm{\filtrembed[j_1](\mathbf{x} - \mathbf{y} - \mathbf{v})} \ge \frac{1}{2} \shvec(\filtrembed[j_1](\lattice))
    \;\text.
\end{equation}

Under assumption \eqref{eqn:case-condition}, it suffices to prove that there exists an index $j_0 \in \{1, \dots, m\}$ such that
\begin{equation*}
    \min \bigl( \dist_{\filtrembed[j_0](\torus)}(\filtrembed[j_0](\mathbf{x}), \filtrembed[j_0](\mathbf{y})), q^2 \shvec(\filtrembed[j_0](\lattice)) \bigr)^2 \ge c \cdot \normsq{\filtrprefixeq[j_1](\mathbf{x} - \mathbf{y} - \mathbf{v})}
    \;\text,
\end{equation*}
or equivalently,
\begin{align} \label{eqn:lower-item-dist}
    \normsq{\filtrembed[j_0](\mathbf{x} - \mathbf{y} - \mathbf{v}')} &\ge c \cdot \normsq{\filtrprefixeq[j_1](\mathbf{x} - \mathbf{y} - \mathbf{v})}
    \;\text, \ \text{and} \\ \label{eqn:lower-item-shortest}
    q^4 \shvec^2(\filtrembed[j_0](\lattice)) &\ge c \cdot \normsq{\filtrprefixeq[j_1](\mathbf{x} - \mathbf{y} - \mathbf{v})}
    \;\text,
\end{align}
where $\mathbf{v}' \in \lattice$ is a lattice point such that $\filtrembed[j_0](\mathbf{v}')$ realizes $\dist_{\filtrembed[j_0](\torus)}(\filtrembed[j_0](\mathbf{x}), \filtrembed[j_0](\mathbf{y}))$,
and $c$ is some absolute constant.
Note that, without loss of generality, it can be assumed that
\begin{equation} \label{eqn:lower-item-covering}
    \norm{\filtrprefixne[j_0](\mathbf{x} - \mathbf{y} - \mathbf{v}')} \le \cover(\lattice_{j_0-1})
\end{equation}
(since otherwise, we can use $\mathbf{v}' + \mathbf{u}$ instead of $\mathbf{v}'$, where $\mathbf{u} \in \lattice_{j_0-1}$ realizes $\dist(\filtrprefixne[j_0](\mathbf{x} - \mathbf{y} - \mathbf{v}'), \lattice_{j_0-1})$).

We choose $j_0 = j_1$ if
\begin{equation*}
    \cover(\lattice_{j_1-1}) \le \frac{1}{4} \shvec(\filtrembed[j_1](\lattice))
    \;\text,
\end{equation*}
and otherwise $j_0 = j_1 - 1$.
By Corollary~\ref{cor:filtr} and the condition $q \le \gamma^2 / 32$, we know that
\begin{equation*}
    \cover(\lattice_{j_1-2})
    \le \frac{q}{\gamma^2} \cdot \shvec(\filtrembed[j_1](\lattice))
    \le \frac{1}{32} \shvec(\filtrembed[j_1](\lattice))
    \;\text.
\end{equation*}
Moreover, as $\lattice_{j_1} / \lattice_{j_1-1}$ is both a quotient of $\lattice_{j_1}$ and a sublattice of $\filtrembed[j_1](\lattice)$,
\begin{equation*}
    \cover(\lattice_{j_1})
    \ge \cover(\lattice_{j_1} / \lattice_{j_1-1})
    \ge \frac{1}{2} \shvec(\lattice_{j_1} / \lattice_{j_1-1})
    \ge \frac{1}{2} \shvec(\filtrembed[j_1](\lattice))
\end{equation*}
(and the last inequality is actually an equality due to Lemma~\ref{lem:shortest-untouched}).
Therefore $j_0$ satisfies
\begin{align} \label{eqn:lower-covering-le}
    \cover(\lattice_{j_0-1}) &\le \frac{1}{4} \shvec(\filtrembed[j_1](\lattice))
    \;\text{, and} \\ \label{eqn:lower-covering-gt}
    \cover(\lattice_{j_0}) &> \frac{1}{4} \shvec(\filtrembed[j_1](\lattice))
    \;\text.
\end{align}

We first prove~\eqref{eqn:lower-item-shortest} for this choice of $j_0$:
\begin{equation*}
    \begin{aligned}
        q^2 \shvec(\filtrembed[j_0](\lattice))
        &\ge q \cover(\lattice_{j_0}) \\
        &> \frac{q}{4} \cdot \lambda_1(\filtrembed[j_1](\lattice)) \\
        &\ge \frac{1}{4} \cover(\lattice_{j_1}) \\
        &\ge \frac{1}{4} \norm{\filtrprefixeq[j_1](\mathbf{x} - \mathbf{y} - \mathbf{v})}
        \;\text,
    \end{aligned}
\end{equation*}
where the first and third inequalities use  Corollary~\ref{cor:filtr}, the second inequality follows from \eqref{eqn:lower-covering-gt}, and the last inequality uses Lemma~\ref{lem:Voronoi}.

We next prove~\eqref{eqn:lower-item-dist} for this choice of $j_0$.
We begin with showing that
\begin{equation} \label{eqn:lower-eq}
    \filtrsuffixne[j_2](\mathbf{v}') = \filtrsuffixne[j_2](\mathbf{v})
    \;\text,
\end{equation}
where $j_2 = \min(j_1 + 1, m)$.
Suppose towards contradiction that $\filtrsuffixne[j_2](\mathbf{v}') \ne \filtrsuffixne[j_2](\mathbf{v})$ (implying $j_2 < m$, and thus $j_2 = j_1 + 1$). Then, by definition, $\norm{\filtrembed[j_1+2](\mathbf{v} - \mathbf{v}')} \ge \shvec(\filtrembed[j_1+2](\lattice))$.
Hence
\begin{equation*}
    \begin{aligned}
        \norm{\filtrprefixeq[j_1](\mathbf{x} - \mathbf{y} - \mathbf{v})}
        &> \frac{1}{2} \norm{\mathbf{x} - \mathbf{y} - \mathbf{v}} \\
        &\ge \frac{1}{2}  \norm{\filtrembed[j_0](\mathbf{x} - \mathbf{y} - \mathbf{v})} \\
        &\ge \frac{1}{4} \norm{\filtrembed[j_0](\mathbf{v} - \mathbf{v}')} \\
        &\ge \frac{1}{4} \norm{\filtrsuffixeq[j_1+2](\filtrembed[j_0](\mathbf{v} - \mathbf{v}'))} \\
        &= \frac{\alpha^{j_1 - j_0 + 2}}{4} \norm{\filtrembed[j_1+2](\mathbf{v} - \mathbf{v}')} \\
        &\ge \frac{\alpha^{j_1 - j_0 + 2}}{4} \shvec(\filtrembed[j_1+2](\lattice))
        \;\text,
    \end{aligned}
\end{equation*}
where the first inequality follows from \eqref{eqn:case-condition} and the third inequality uses Lemma~\ref{lem:closest-change} with lattice $\filtrembed[j_0](\lattice)$.
On the other hand, we know that $\norm{\filtrprefixeq[j_1](\mathbf{x} - \mathbf{y} - \mathbf{v})} \le \cover(\lattice_{j_1})$ according to Lemma~\ref{lem:Voronoi}.
Then we have
\begin{equation} \label{eqn:lower-contradict}
    \frac{\alpha^{j_1 - j_0 + 2}}{4} \shvec(\filtrembed[j_1+2](\lattice))
    < \cover(\lattice_{j_1}) \le \frac{q}{\gamma^2} \shvec(\filtrembed[j_1+2](\lattice))
    \;\text,
\end{equation}
where the last inequality uses Corollary~\ref{cor:filtr}.
Since $\alpha^{j_1 - j_0 + 2} \ge \alpha^3 \ge 1/8$,
\eqref{eqn:lower-contradict} contradicts the condition $q \le \gamma^2 / 32$.

Based on \eqref{eqn:lower-eq}, we continue to prove \eqref{eqn:lower-item-dist} with the following observation:
\begin{align}
    \normsq{\filtrembed[j_0](\mathbf{x} - \mathbf{y} - \mathbf{v}')}
    &= \sum_{i = j_0}^{m} \alpha^{2(i - j_0)} \normsq{\filtrproj[i](\mathbf{x} - \mathbf{y} - \mathbf{v}')}
    \notag \\
    &\ge \alpha^{2(j_2 - j_0)} \sum_{i = j_0}^{j_2} \normsq{\filtrproj[i](\mathbf{x} - \mathbf{y} - \mathbf{v}')}
    \notag \\
    &= \alpha^{2(j_2 - j_0)} (\normsq{\mathbf{x} - \mathbf{y} - \mathbf{v}'} - \normsq{\filtrprefixne[j_0](\mathbf{x} - \mathbf{y} - \mathbf{v}')} - \normsq{\filtrsuffixne[j_2](\mathbf{x} - \mathbf{y} - \mathbf{v}')})
    \notag \\ \label{eqn:lower-item-dist-additive}
    &\ge \alpha^{2(j_2 - j_0)} (\normsq{\mathbf{x} - \mathbf{y} - \mathbf{v}} - \cover^2(\lattice_{j_0-1}) - \normsq{\filtrsuffixne[j_2](\mathbf{x} - \mathbf{y} - \mathbf{v})})
    \;\text,
\end{align}
where the last inequality uses the following three facts:
(i) as $\mathbf{v}$ realizes $\dist_{\torus}(\mathbf{x}, \mathbf{y})$, $\norm{\mathbf{x} - \mathbf{y} - \mathbf{v}'} \ge \norm{\mathbf{x} - \mathbf{y} - \mathbf{v}}$;
(ii) the term $\norm{\filtrprefixne[j_0](\mathbf{x} - \mathbf{y} - \mathbf{v}')}$ is bounded from above by \eqref{eqn:lower-item-covering};
and (iii) $\filtrsuffixne[j_2](\mathbf{x} - \mathbf{y} - \mathbf{v}') = \filtrsuffixne[j_2](\mathbf{x} - \mathbf{y} - \mathbf{v})$ due to \eqref{eqn:lower-eq}.
Moreover, according to \eqref{eqn:lower-covering-le} and \eqref{eqn:lower-change},
\begin{equation*}
    \begin{aligned}
        \cover(\lattice_{j_0-1})
        &\le \frac{1}{4} \shvec(\filtrembed[j_1](\lattice)) \\
        &\le \frac{1}{2} \norm{\filtrembed[j_1](\mathbf{x} - \mathbf{y} - \mathbf{v})} \\
        &\le \frac{1}{2} \norm{\mathbf{x} - \mathbf{y} - \mathbf{v}}
        \;\text,
    \end{aligned}
\end{equation*}
and according to \eqref{eqn:case-condition},
\begin{equation*}
    \begin{aligned}
        \normsq{\filtrsuffixne[j_2](\mathbf{x} - \mathbf{y} - \mathbf{v})}
        &\le \normsq{\filtrsuffixne[j_1](\mathbf{x} - \mathbf{y} - \mathbf{v})} \\
        &< \frac{1}{2} \normsq{\mathbf{x} - \mathbf{y} - \mathbf{v}}
        \;\text.
    \end{aligned}
\end{equation*}
Hence~\eqref{eqn:lower-item-dist-additive} is further bounded from below by
\begin{equation*}
    \begin{aligned}
        \alpha^{2(j_2 - j_0)} \Bigl( 1 - \frac{1}{4} - \frac{1}{2} \Bigr) \normsq{\mathbf{x} - \mathbf{y} - \mathbf{v}}
        &\ge \frac{\alpha^4}{4} \normsq{\mathbf{x} - \mathbf{y} - \mathbf{v}} \\
        &\ge \frac{\alpha^4}{4} \normsq{\filtrprefixeq[j_1](\mathbf{x} - \mathbf{y} - \mathbf{v})}
        \;\text.
    \end{aligned}
\end{equation*}
This completes the proof of~\eqref{eqn:lower-item-dist}, and the proof of the lemma.
\end{proof}

\renewcommand{\filtrproj}{\verbosefiltrproj}
\renewcommand{\filtrsuffixeq}{\verbosefiltrsuffixeq}
\renewcommand{\filtrprefixne}{\verbosefiltrprefixne}
\renewcommand{\filtrsuffixne}{\verbosefiltrsuffixne}
\renewcommand{\filtrprefixeq}{\verbosefiltrprefixeq}
\renewcommand{\filtrembed}{\verbosefiltrembed}
\renewcommand{\fullfiltrembed}{\verbosefullfiltrembed}

\subsection{Summary of Embedding into Tori}\label{composition}

By Lemma~\ref{lem:existsgoodfilter}, for any lattice $\lattice$, there exists an $(n\sqrt{n},n)$-filtration\footnote{This choice of filtration actually only shows the Lemma \ref{lem:filtration-distortion} for sufficiently large $n$. Choosing a $(32n\sqrt{n},32n)$-filtration gives us the lemma for all $n \ge 1$.} $\filtr$ of $\lattice$. Applying Lemmas~\ref{lem:filtration-upper} and \ref{lem:filtration-lower} to the embedding $\fullfiltrembed$ with $\alpha = 1/2$, we have the following.

\begin{lemma} \label{lem:filtration-distortion}
For any sufficiently large $n \ge 1$ and lattice $\lattice \subseteq \R^n$,
there exists $m \ge 1$ and embedding $\finalfullfiltrembed = (\finalfiltrembed[1], \dots, \finalfiltrembed[m])$ such that each $\finalfiltrembed$ maps the torus $\torus$ to some other torus, and $\finalfullfiltrembed$ satisfies
\begin{gather*}
    \sum_{j = 1}^m \dist_{\finalfiltrembed(\torus)}(\finalfiltrembed(\mathbf{x}), \finalfiltrembed(\mathbf{y}))^2
    \le
    c_{E, u} \cdot \dist_{\torus}(\mathbf{x}, \mathbf{y})^2
    \;\text{, and} \\
    \sum_{j = 1}^m \min \bigl( \dist_{\finalfiltrembed(\torus)}(\finalfiltrembed(\mathbf{x}), \finalfiltrembed(\mathbf{y})), p(n) \shvec(\finalfiltrembed(\lattice)) \bigr)^2 \ge c_{E, l} \cdot \dist_{\torus}(\mathbf{x}, \mathbf{y})^2
    \;\text,
\end{gather*}
where $c_{E, u}$ and $c_{E, l}$ are positive absolute constants and $p(n)$ is a fixed polynomial.
\end{lemma}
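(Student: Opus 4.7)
The plan is to chain together Lemmas~\ref{lem:existsgoodfilter}, \ref{lem:filtration-upper}, and \ref{lem:filtration-lower} with a specific parameter choice. First I would invoke Lemma~\ref{lem:existsgoodfilter} with $\gamma = n$ to obtain a $(n\sqrt{n}, n)$-filtration $\filtr$ of $\lattice$, setting $q := n\sqrt{n}$. I would then fix $\alpha = 1/2$ and take the embedding $\finalfullfiltrembed$ asserted by the lemma to be $\fullfiltrembed$, with each component $\finalfiltrembed$ being $\filtrembed$. By the remark following Definition~\ref{def-embedding-lattice-component}, each $\filtrembed$ is a well-defined embedding of $\torus$ into the torus $\filtrembed(\torus)$, as required.

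Next I would check the hypotheses of Lemma~\ref{lem:filtration-lower}, which are the tightest of the three. It requires $\gamma \ge 2$ (holds for $n \ge 2$), $1/2 \le \alpha < 1$ (holds by construction), and $q \le \gamma^2 / 32$. The last inequality becomes $n\sqrt{n} \le n^2/32$, i.e., $n \ge 1024$. This is precisely where the "sufficiently large $n$" qualifier enters the statement; as the footnote remarks, the same argument using a $(32 n \sqrt{n}, 32 n)$-filtration (which Lemma~\ref{lem:existsgoodfilter} produces with $\gamma = 32 n$) handles all $n \ge 1$ at the cost of worse absolute constants. Note also that $\alpha = 1/2 \ge 1/\gamma$, so Lemma~\ref{lem:shortest-untouched} and Corollary~\ref{cor:filtr}, on which Lemma~\ref{lem:filtration-lower} relies, apply.

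With the hypotheses verified, I would read off the two conclusions. Lemma~\ref{lem:filtration-upper} applied to $\fullfiltrembed$ immediately gives the upper bound with $c_{E,u} = 1/(1-\alpha^2) = 4/3$. Lemma~\ref{lem:filtration-lower} applied to the same embedding gives the matching truncated lower bound with constant $c_{E,l} = c_E$ and polynomial $p(n) := q^2 = n^3$; one should observe that $\shvec(\filtrembed(\lattice))$, which appears inside the $\min$ in Lemma~\ref{lem:filtration-lower}, is indeed the shortest-vector length of the target lattice of $\finalfiltrembed$, so the two $\min$-expressions literally agree. There is no real obstacle in this composition; all the substantive geometry lives in Lemmas~\ref{lem:filtration-upper} and~\ref{lem:filtration-lower}, and the only thing to be careful about is verifying that the chosen $(q,\gamma) = (n\sqrt n, n)$ meets the quadratic relation $q \le \gamma^2/32$ demanded by the contraction lemma.
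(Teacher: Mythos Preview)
Your proposal is correct and matches the paper's own argument essentially verbatim: invoke Lemma~\ref{lem:existsgoodfilter} with $\gamma = n$ to get an $(n\sqrt{n}, n)$-filtration, set $\alpha = 1/2$, and read off the two inequalities from Lemmas~\ref{lem:filtration-upper} and~\ref{lem:filtration-lower}, with the ``sufficiently large $n$'' caveat arising from the constraint $q \le \gamma^2/32$ just as you identify (and the same footnoted fix via a $(32n\sqrt{n}, 32n)$-filtration).
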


\section{Putting it All Together}

{
\def\thetheorem{\ref{thm:main}}
\begin{theorem}
For any lattice $\lattice \subseteq \R^n$ there exists a metric embedding of $\torus$ into Hilbert space with distortion $O(\sqrt{n\log n})$.
\end{theorem}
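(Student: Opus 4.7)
The plan is to compose the two embeddings already developed. Given a lattice $\lattice \subseteq \R^n$ with $n$ sufficiently large (the small-$n$ case follows trivially by e.g.\ using a basis of $\lattice$ directly), Lemma~\ref{lem:filtration-distortion} produces an intermediate embedding $\finalfullfiltrembed = (\finalfiltrembed[1], \dots, \finalfiltrembed[m])$ of $\torus$ into a product of tori $\finalfiltrembed(\torus)$. Its contraction bound is phrased using $\min(\cdot, p(n) \shvec(\finalfiltrembed(\lattice)))$, which is tailored to match the saturation behavior of the Gaussian embedding from Lemma~\ref{lem:gaussian-distortion}. On each component torus $\finalfiltrembed(\torus)$ I therefore apply the Gaussian embedding of Definition~\ref{def-embedding-gaussian} (with respect to the lattice $\finalfiltrembed(\lattice)$), with the parameter $k$ chosen as the smallest integer satisfying $2^{k-1} \ge p(n)$, so that $k = O(\log n)$. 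The final embedding into Hilbert space is the direct sum over $j=1,\dots,m$ of these $m$ composed embeddings, equipped with the natural $\ell_2$ metric.

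For the expansion, applying the upper bound of Lemma~\ref{lem:gaussian-distortion} to each component and then the upper bound of Lemma~\ref{lem:filtration-distortion} gives
\[
\sum_{j=1}^m \pi k \cdot \dist_{\finalfiltrembed(\torus)}(\finalfiltrembed(\mathbf{x}), \finalfiltrembed(\mathbf{y}))^2 \le \pi k \cdot c_{E,u} \cdot \dist_{\torus}(\mathbf{x}, \mathbf{y})^2.
\]
For the contraction, the lower bound of Lemma~\ref{lem:gaussian-distortion} together with the choice of $k$ shows that each squared component distance is bounded below by $\frac{c_H}{n} \min(\dist_{\finalfiltrembed(\torus)}(\finalfiltrembed(\mathbf{x}), \finalfiltrembed(\mathbf{y})), p(n) \shvec(\finalfiltrembed(\lattice)))^2$; summing over $j$ and applying the second inequality of Lemma~\ref{lem:filtration-distortion} then yields a total lower bound of $\frac{c_H \, c_{E,l}}{n} \cdot \dist_{\torus}(\mathbf{x}, \mathbf{y})^2$.

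Dividing the upper bound by the lower bound yields squared distortion $O(k \cdot n) = O(n \log n)$, hence distortion $O(\sqrt{n \log n})$, as required. Essentially all of the work has already been absorbed into Lemma~\ref{lem:filtration-distortion}, whose saturation scale $p(n) \shvec(\cdot)$ was engineered precisely to align with the Gaussian saturation $2^{k-1} \shvec(\cdot)$ upon choosing $k = \Theta(\log n)$; the genuine technical difficulty, namely the ``compressed projection'' trick that avoids the $\sqrt{m}$ loss in per-scale decompositions, was handled in Section~\ref{sec:contraction}. I do not expect any new obstacle to arise at this final composition step.
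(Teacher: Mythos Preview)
Your proposal is correct and follows essentially the same approach as the paper: compose the filtration embedding of Lemma~\ref{lem:filtration-distortion} with the Gaussian embedding $\finalgaussembed$ on each component, choosing $k \approx \log_2 p(n)$ so that the saturation scales match, and read off the distortion $\sqrt{\pi k n \cdot c_{E,u}/(c_H c_{E,l})} = O(\sqrt{n\log n})$. The only cosmetic difference is that the paper handles small $n$ by citing the Khot--Naor embedding rather than by a direct argument.
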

\addtocounter{theorem}{-1}
}

\begin{proof}
It suffices to show the embedding for sufficiently large $n$ (by, say, using the embedding from~\cite{KhotN06} for small $n$). Consider the composition
\begin{equation*}
    \Bigl(
    \finalgaussembed[{\finalfiltrembed[1](\lattice)}] \circ \finalfiltrembed[1], \dots, \finalgaussembed[{\finalfiltrembed[m](\lattice)}] \circ \finalfiltrembed[m]
    \Bigr)
    \;\text,
\end{equation*}
where $(\finalfiltrembed[1], \dots, \finalfiltrembed[m])$ is the embedding provided by Lemma~\ref{lem:filtration-distortion}. Let $k = \lceil \log_2 p(n) \rceil+1$ (where $p(n)$ is the fixed polynomial in Lemma~\ref{lem:filtration-distortion}).
By Lemma~\ref{lem:gaussian-distortion} and Lemma~\ref{lem:filtration-distortion}, noting that the modified contraction properties in both match, it follows immediately that the composed embedding has distortion at most
\begin{equation*}
    \sqrt\frac{\pi k n \cdot c_{E, u}}{c_H \cdot c_{E, l}}
    \;\text,
\end{equation*}
where $c_H$, $c_{E, u}$ and $c_{E, l}$ are all absolute constants.
Note that $k = \Theta(\log n)$.
Hence the distortion of the composed embedding is $O(\sqrt{n \log n})$.
\end{proof}


\bibliography{main}{}
\bibliographystyle{alpha}

\end{document}